\documentclass[letterpaper,12pt]{article}
\pagestyle{plain}
\usepackage{amsmath,amssymb,amsthm,url}
\usepackage[letterpaper,hmargin=1in,vmargin=0.9in]{geometry}

\usepackage{tikz}
\usetikzlibrary{decorations.markings}
\tikzstyle{vertex}=[circle, draw, inner sep=0pt, minimum size=6pt]
\newcommand{\vertex}{\node[vertex]}

\usepackage{caption}

\usetikzlibrary{arrows,calc,patterns,positioning,shapes}
\usetikzlibrary{decorations.pathmorphing}
\tikzset{
coil/.style={decorate, decoration={coil,amplitude=4pt,segment length=5pt}},
snake/.style={decorate, decoration={snake}},
zigzag/.style={decorate, decoration={zigzag}}
}

\title{Block colourings of star systems}
\author{Robert F.\ Bailey and Iren Darijani\footnote{School of Science and the Environment (Mathematics), Memorial University--Grenfell Campus, 20 University Drive, Corner Brook NL A2H~6P9, Canada. E-mail: \texttt{rbailey@grenfell.mun.ca}, \texttt{i.darijani@mun.ca}} }

\newtheorem{thm}{Theorem}[section]

\newtheorem{cor}[thm]{Corollary}

\theoremstyle{definition}

\newtheorem{example}[thm]{Example}

\newcommand{\selda}{K\"{u}\c{c}\"{u}k\c{c}if\c{c}i}

\DeclareMathOperator{\Sym}{Sym}

\begin{document}

\maketitle

\begin{abstract}
An $e$-star system of order $n$ is a decomposition of the complete graph $K_n$ into copies of the complete bipartite graph $K_{1,e}$ (or $e$-star).  Such systems are known to exist if and only if $n\geq 2e$ and $e$ divides $\binom{n}{2}$.  We consider block colourings of such systems, where each $e$-star is assigned a colour, and two $e$-stars which share a vertex receive different colours.  We present a computer analysis of block colourings of small $3$-star systems. Furthermore, we prove that: (i) for $n\equiv 0,1$ mod $2e$ there exists either an $n$ or $(n-1)$-block colourable $e$-star system of order $n$; and (ii) when $e=3$, the same result holds in the remaining congruence classes mod $6$.
\end{abstract}

\section{Introduction} \label{sec:intro}

A {\em $G$-decomposition} of a graph $H$ is a pair $\mathcal{D}=(V,\mathcal{B})$, where $V$ is the vertex set of $H$, and $\mathcal{B}$ is a set of subgraphs of $H$, each isomorphic to $G$, whose edge sets partition the edge set of $H$.  The elements of $\mathcal{B}$ are known as the {\em blocks} of $\mathcal{D}$.  A {\em $G$-design} of order $n$ is a $G$-decomposition of the complete graph $K_n$ on $n$ vertices.  In the case where $G$ is a complete bipartite graph $K_{1,e}$, also known as an {\em $e$-star}, we call the design an {\em $e$-star system} of order $n$, denoted $S_e(n)$.

Necessary and sufficient conditions for the existence of $e$-star systems were determined by Yamamoto {\em et al.}~\cite{Yamamoto1975} in 1975: they showed that an $e$-star system of order $n$ exists if and only if (i) $n \geq 2e$, and (ii) $e$ divides $\binom{n}{2}$. We call a positive integer $n$ {\em admissible} if there exists an $e$-star system of order $n$.  Since a $1$-star is an edge and a $2$-star is a path, we will consider $e$-star systems for $e\geq 3$.

Our notation for an $e$-star isomorphic to $K_{1,e}$ will be $\{x; y_1,\ldots,y_e\}$, where $x$ is the vertex of degree $e$ (the {\em root} vertex), and $y_1,\ldots,y_e$ are the vertices of degree $1$ (the {\em pendant} vertices).

\begin{example} \label{3star6}
The following pair $(V,\mathcal{B})$ is a $3$-star system of order $6$, where $V=\{1,\ldots,6\}$ is the set of points and $\mathcal B=\big \{ \{1;3,5,6\}$, $\{2;1,3,6\}$, $\{4;1,2,3\}$, $\{5;2,3,4\}$, $\{6;3,4,5\}\big\}$ is the set of blocks ($3$-stars). 
\vspace{0.5cm} 

 \begin{tikzpicture}[x=1cm, y=1cm]
 \tikzstyle{vertex}=[circle, draw, inner sep=0pt, minimum size=3pt]	
    \vertex (c1) at (60:1) [label=60:${1}$, fill=black]{};
 	\vertex (c6) at (120:1) [label=120:${6}$, fill=black]{};
        \vertex (c5) at (180:1) [label=180:${5}$, fill=black]{};
        \vertex (c4) at (240:1) [label=240:${4}$, fill=black]{};
        \vertex (c3) at (300:1) [label=300:${3}$, fill=black]{};
        \vertex (c2) at (360:1) [label=360:${2}$, fill=black]{};
		
 \path [line width=1.3pt]  
	(c1) edge (c3)
	(c1) edge (c5)
	(c1) edge (c6);	
\end{tikzpicture} \hspace{0.3cm}
\begin{tikzpicture}[x=1cm, y=1cm]
 \tikzstyle{vertex}=[circle, draw, inner sep=0pt, minimum size=3pt]	
    \vertex (c1) at (60:1) [label=60:${1}$, fill=black]{};
 	\vertex (c6) at (120:1) [label=120:${6}$, fill=black]{};
        \vertex (c5) at (180:1) [label=180:${5}$, fill=black]{};
        \vertex (c4) at (240:1) [label=240:${4}$, fill=black]{};
        \vertex (c3) at (300:1) [label=300:${3}$, fill=black]{};
        \vertex (c2) at (360:1) [label=360:${2}$, fill=black]{};

  \path [line width=1.3pt]   
	(c2) edge (c6)
	(c2) edge (c3)
         (c2) edge (c1);	
\end{tikzpicture}	\hspace{0.3cm}
\begin{tikzpicture}[x=1cm, y=1cm]
 \tikzstyle{vertex}=[circle, draw, inner sep=0pt, minimum size=3pt]	
    \vertex (c1) at (60:1) [label=60:${1}$, fill=black]{};
 	\vertex (c6) at (120:1) [label=120:${6}$, fill=black]{};
        \vertex (c5) at (180:1) [label=180:${5}$, fill=black]{};
        \vertex (c4) at (240:1) [label=240:${4}$, fill=black]{};
        \vertex (c3) at (300:1) [label=300:${3}$, fill=black]{};
        \vertex (c2) at (360:1) [label=360:${2}$, fill=black]{};
		
	 \path [line width=1.3pt] 	 
	(c4) edge (c3)
	(c4) edge (c2)
	(c4) edge (c1);		
\end{tikzpicture} \hspace{0.3cm}
 \begin{tikzpicture}[x=1cm, y=1cm]
 \tikzstyle{vertex}=[circle, draw, inner sep=0pt, minimum size=3pt]	
    \vertex (c1) at (60:1) [label=60:${1}$, fill=black]{};
 	\vertex (c6) at (120:1) [label=120:${6}$, fill=black]{};
        \vertex (c5) at (180:1) [label=180:${5}$, fill=black]{};
        \vertex (c4) at (240:1) [label=240:${4}$, fill=black]{};
        \vertex (c3) at (300:1) [label=300:${3}$, fill=black]{};
        \vertex (c2) at (360:1) [label=360:${2}$, fill=black]{};
	
 \path [line width=1.3pt]  
	(c5) edge (c4)
	(c5) edge (c3)
	(c5) edge (c2);	
\end{tikzpicture}
\hspace{0.3cm}\begin{tikzpicture}[x=1cm, y=1cm]
 \tikzstyle{vertex}=[circle, draw, inner sep=0pt, minimum size=3pt]	
    \vertex (c1) at (60:1) [label=60:${1}$, fill=black]{};
 	\vertex (c6) at (120:1) [label=120:${6}$, fill=black]{};
        \vertex (c5) at (180:1) [label=180:${5}$, fill=black]{};
        \vertex (c4) at (240:1) [label=240:${4}$, fill=black]{};
        \vertex (c3) at (300:1) [label=300:${3}$, fill=black]{};
        \vertex (c2) at (360:1) [label=360:${2}$, fill=black]{};
       
    \path [line width=1.3pt]     
       		(c6) edge (c5)
		(c6) edge (c4)
		(c6) edge (c3);
\end{tikzpicture} 

\end{example}

\subsection{Block-colourings}

A {\em block-colouring} of a $G$-design $\mathcal{D}=(V,\mathcal{B})$ is a partition of $\mathcal{B}$ into {\em colour classes}, where the blocks in each colour class are mutually disjoint.  (This amounts to colouring the edges of $K_n$ in such a way that the edges in a particular copy of $G$ all receive the same colour, and two edges incident with the same vertex are the same colour if and only if they lie in the same block.)  A design is said to be {\em $k$-block-colourable} if this is possible using $k$ colours; the $G$-design is {\em $k$-block-chromatic} if it is $k$-block-colourable but is not $(k-1)$-block-colourable. If a $G$-design $\mathcal{D}$ is $k$-block-chromatic, we say that its {\em chromatic index} (also known as its {\em block-chromatic number}) is $k$, and we denote this by $\chi'(\mathcal{D})$.  In other words, $\chi'(\mathcal{D})$ is the least integer $k$ for which $\mathcal{D}$ admits a $k$-block-colouring.

In this paper, we will consider properties of block-colourings of $e$-star systems for $e\geq 3$. To begin with, we note that as an $e$-star has $e+1$ vertices, we need that $n\geq 2(e+1)$ in order for two mutually-disjoint blocks to be able to exist; consequently, if $2e\leq n\leq 2e+1$ we will require each block to be assigned its own colour, and the block-chromatic number will trivially be equal to the number of blocks.  Thus the smallest non-trivial example is a block-colouring of a $3$-star system of order~$9$, as shown below.

\begin{example} \label{3star9-colour}
The following pair $(V,\mathcal{B})$ is an $S_3(9)$, where $V=\{1,\ldots,9\}$ and $\mathcal{B}=\big \{ \{1;3,5,6\}$, $\{2;1,3,6\}$, $\{4;1,2,3\}$, $\{5;2,3,4\}$, $\{6;3,4,5\}$, $\{7;1,2,3\}$, $\{8;4,5,9\}$, $\{7;4,5,8\}$, $\{8;1,2,3\}$, $\{6;7,8,9\}$, $\{9;1,2,3\}$, $\{9;4,5,7\}\big\}$. This system is $8$-block-colourable, with colour classes $\mathcal C^1=\big\{ \{1;3,5,6\}\big\}$, $\mathcal C^2=\big\{ \{2;1,3,6\},\{9;4,5,7\}\big\}$, $\mathcal C^3=\big\{\{4;1,2,3\},\{6;7,8,9\}\big\}$, $\mathcal C^4=\big\{ \{5;2,3,4\}\big\}$, $\mathcal C^5=\big \{\{6;3,4,5\}\big\}$, $\mathcal C^6=\big\{\{7;1,2,3\},\{8;4,5,9\}\big\}$, $\mathcal C^7=\big\{\{7;4,5,8\},\linebreak \{9,1,2,3\}\big\}$, and $\mathcal C^8=\big\{\{8;1,2,3\}\big\}$.
\vspace{0.5cm}

 \begin{tikzpicture}[x=1cm, y=1cm]
 \tikzstyle{vertex}=[circle, draw, inner sep=0pt, minimum size=3pt]	
    \vertex (c1) at (40:1) [label=40:${1}$, fill=black]{};
 	\vertex (c9) at (80:1) [label=80:${9}$, fill=black]{};
        \vertex (c8) at (120:1) [label=120:${8}$, fill=black]{};
        \vertex (c7) at (160:1) [label=160:${7}$, fill=black]{};
        \vertex (c6) at (200:1) [label=200:${6}$, fill=black]{};
        \vertex (c5) at (240:1) [label=240:${5}$, fill=black]{};
        \vertex (c4) at (280:1) [label=280:${4}$, fill=black]{};
        \vertex (c3) at (320:1) [label=320:${3}$, fill=black]{};
        \vertex (c2) at (360:1) [label=360:${2}$, fill=black]{};
		
 \path [line width=1.3pt]
                  (c1) edge (c3)
		(c1) edge (c5)
		(c1) edge (c6);
\end{tikzpicture} \hspace{0.5cm}
 \begin{tikzpicture}[x=1cm, y=1cm]
 \tikzstyle{vertex}=[circle, draw, inner sep=0pt, minimum size=3pt]	
    \vertex (c1) at (40:1) [label=40:${1}$, fill=black]{};
 	\vertex (c9) at (80:1) [label=80:${9}$, fill=black]{};
        \vertex (c8) at (120:1) [label=120:${8}$, fill=black]{};
        \vertex (c7) at (160:1) [label=160:${7}$, fill=black]{};
        \vertex (c6) at (200:1) [label=200:${6}$, fill=black]{};
        \vertex (c5) at (240:1) [label=240:${5}$, fill=black]{};
        \vertex (c4) at (280:1) [label=280:${4}$, fill=black]{};
        \vertex (c3) at (320:1) [label=320:${3}$, fill=black]{};
        \vertex (c2) at (360:1) [label=360:${2}$, fill=black]{};
		
 \path [line width=1.3pt]
                  (c2) edge (c1)
		(c2) edge (c3)
		(c2) edge (c6);
\path [line width=1.3pt]
                  (c9) edge[dashed] (c4)
		(c9) edge[dashed] (c5)
		(c9) edge[dashed] (c7);		
		
\end{tikzpicture} \hspace{0.5cm}
 \begin{tikzpicture}[x=1cm, y=1cm]
 \tikzstyle{vertex}=[circle, draw, inner sep=0pt, minimum size=3pt]	
    \vertex (c1) at (40:1) [label=40:${1}$, fill=black]{};
 	\vertex (c9) at (80:1) [label=80:${9}$, fill=black]{};
        \vertex (c8) at (120:1) [label=120:${8}$, fill=black]{};
        \vertex (c7) at (160:1) [label=160:${7}$, fill=black]{};
        \vertex (c6) at (200:1) [label=200:${6}$, fill=black]{};
        \vertex (c5) at (240:1) [label=240:${5}$, fill=black]{};
        \vertex (c4) at (280:1) [label=280:${4}$, fill=black]{};
        \vertex (c3) at (320:1) [label=320:${3}$, fill=black]{};
        \vertex (c2) at (360:1) [label=360:${2}$, fill=black]{};
		
 \path [line width=1.3pt]
                  (c4) edge (c1)
		(c4) edge (c2)
		(c4) edge (c3);
\path [line width=1.3pt]
                  (c6) edge[dashed] (c7)
		(c6) edge[dashed] (c8)
		(c6) edge[dashed] (c9);		
		
\end{tikzpicture} \hspace{0.5cm}
 \begin{tikzpicture}[x=1cm, y=1cm]
 \tikzstyle{vertex}=[circle, draw, inner sep=0pt, minimum size=3pt]	
    \vertex (c1) at (40:1) [label=40:${1}$, fill=black]{};
 	\vertex (c9) at (80:1) [label=80:${9}$, fill=black]{};
        \vertex (c8) at (120:1) [label=120:${8}$, fill=black]{};
        \vertex (c7) at (160:1) [label=160:${7}$, fill=black]{};
        \vertex (c6) at (200:1) [label=200:${6}$, fill=black]{};
        \vertex (c5) at (240:1) [label=240:${5}$, fill=black]{};
        \vertex (c4) at (280:1) [label=280:${4}$, fill=black]{};
        \vertex (c3) at (320:1) [label=320:${3}$, fill=black]{};
        \vertex (c2) at (360:1) [label=360:${2}$, fill=black]{};
		
 \path [line width=1.3pt]
                  (c5) edge (c2)
		(c5) edge (c3)
		(c5) edge (c4);	
		
\end{tikzpicture}
\vspace{0.5cm}

 \begin{tikzpicture}[x=1cm, y=1cm]
 \tikzstyle{vertex}=[circle, draw, inner sep=0pt, minimum size=3pt]	
    \vertex (c1) at (40:1) [label=40:${1}$, fill=black]{};
 	\vertex (c9) at (80:1) [label=80:${9}$, fill=black]{};
        \vertex (c8) at (120:1) [label=120:${8}$, fill=black]{};
        \vertex (c7) at (160:1) [label=160:${7}$, fill=black]{};
        \vertex (c6) at (200:1) [label=200:${6}$, fill=black]{};
        \vertex (c5) at (240:1) [label=240:${5}$, fill=black]{};
        \vertex (c4) at (280:1) [label=280:${4}$, fill=black]{};
        \vertex (c3) at (320:1) [label=320:${3}$, fill=black]{};
        \vertex (c2) at (360:1) [label=360:${2}$, fill=black]{};
		
 \path [line width=1.3pt]
                  (c6) edge (c4)
		(c6) edge (c3)
		(c6) edge (c5);	
		
\end{tikzpicture} \hspace{0.5cm}
 \begin{tikzpicture}[x=1cm, y=1cm]
 \tikzstyle{vertex}=[circle, draw, inner sep=0pt, minimum size=3pt]	
    \vertex (c1) at (40:1) [label=40:${1}$, fill=black]{};
 	\vertex (c9) at (80:1) [label=80:${9}$, fill=black]{};
        \vertex (c8) at (120:1) [label=120:${8}$, fill=black]{};
        \vertex (c7) at (160:1) [label=160:${7}$, fill=black]{};
        \vertex (c6) at (200:1) [label=200:${6}$, fill=black]{};
        \vertex (c5) at (240:1) [label=240:${5}$, fill=black]{};
        \vertex (c4) at (280:1) [label=280:${4}$, fill=black]{};
        \vertex (c3) at (320:1) [label=320:${3}$, fill=black]{};
        \vertex (c2) at (360:1) [label=360:${2}$, fill=black]{};
		
 \path [line width=1.3pt]
                  (c7) edge (c1)
		(c7) edge (c3)
		(c7) edge (c2);
\path [line width=1.3pt]
                  (c8) edge[dashed] (c4)
		(c8) edge[dashed] (c5)
		(c8) edge[dashed] (c9);		
		
\end{tikzpicture}  \hspace{0.5cm}
 \begin{tikzpicture}[x=1cm, y=1cm]
 \tikzstyle{vertex}=[circle, draw, inner sep=0pt, minimum size=3pt]	
    \vertex (c1) at (40:1) [label=40:${1}$, fill=black]{};
 	\vertex (c9) at (80:1) [label=80:${9}$, fill=black]{};
        \vertex (c8) at (120:1) [label=120:${8}$, fill=black]{};
        \vertex (c7) at (160:1) [label=160:${7}$, fill=black]{};
        \vertex (c6) at (200:1) [label=200:${6}$, fill=black]{};
        \vertex (c5) at (240:1) [label=240:${5}$, fill=black]{};
        \vertex (c4) at (280:1) [label=280:${4}$, fill=black]{};
        \vertex (c3) at (320:1) [label=320:${3}$, fill=black]{};
        \vertex (c2) at (360:1) [label=360:${2}$, fill=black]{};
		
 \path [line width=1.3pt]
                  (c7) edge (c4)
		(c7) edge (c5)
		(c7) edge (c8);
\path [line width=1.3pt]
                  (c9) edge[dashed] (c1)
		(c9) edge[dashed] (c2)
		(c9) edge[dashed] (c3);		
		
\end{tikzpicture} \hspace{0.5cm}
 \begin{tikzpicture}[x=1cm, y=1cm]
 \tikzstyle{vertex}=[circle, draw, inner sep=0pt, minimum size=3pt]	
    \vertex (c1) at (40:1) [label=40:${1}$, fill=black]{};
 	\vertex (c9) at (80:1) [label=80:${9}$, fill=black]{};
        \vertex (c8) at (120:1) [label=120:${8}$, fill=black]{};
        \vertex (c7) at (160:1) [label=160:${7}$, fill=black]{};
        \vertex (c6) at (200:1) [label=200:${6}$, fill=black]{};
        \vertex (c5) at (240:1) [label=240:${5}$, fill=black]{};
        \vertex (c4) at (280:1) [label=280:${4}$, fill=black]{};
        \vertex (c3) at (320:1) [label=320:${3}$, fill=black]{};
        \vertex (c2) at (360:1) [label=360:${2}$, fill=black]{};
		
 \path [line width=1.3pt]
                  (c8) edge (c1)
		(c8) edge (c2)
		(c8) edge (c3);
\end{tikzpicture}  
 
\end{example}

In subsection~\ref{sec:cliques}, we will see that this is the best possible: the least possible chromatic index for an $S_3(9)$ is $8$.

A $G$-design $\mathcal{D}=(V,\mathcal{B})$ is {\em resolvable} if it has a block-colouring where each colour class contains every vertex in $V$.  The existence of resolvable $G$-designs, also known as {\em $G$-factorizations}, is a major area of design theory with a long history dating back to the mid-19th century and the work of T.\ P.\ Kirkman.  Clearly, for a $G$-design of order $n$ to be resolvable, the number of vertices of $G$ must divide $n$.  In many well-known cases, this necessary condition is also sufficient: if $G=K_3$ (i.e.\ {\em Kirkman triple systems}), this was proved by Ray-Chaudhuri and Wilson in 1971~\cite{RC_wilson}; for $G=C_m$ (where $m\geq 3$ must be odd) by Alspach {\em et al.}\ in 1989 \cite{Alspach89}; for $G=P_k$ (i.e.\ a path with $k$ vertices), by Horton in 1985~\cite{Horton85} and Bermond, Heinrich and Yu in 1990~\cite{Bermond90}.  For the more general question of determining the least possible chromatic index of a $G$-design, see Vanstone {\em et al.} \cite{Vanstone93} for $G=K_3$, and Danziger, Mendelsohn and Quattrocchi \cite{Danziger04} for $G=P_3$ and $G=P_4$.  

In the case of resolvable $e$-star systems, the necessary conditions were obtained by Huang in 1976~\cite{Huang76}: for such as system of order $n$ to exist, we must have that $n\equiv 0$ (mod~$e+1$) and $n\equiv 1$ (mod~$2e$); clearly these cannot be satisfied when $e$ is odd.  When $e$ is even, these conditions were shown to be sufficient by Yu in 1993~\cite{Yu93}.  More recently, an elementary proof of the non-existence of resolvable $3$-star systems was given by \selda\ {\em et al.}\ in 2015~\cite{resolvable}.  However, as far as the authors are aware, the more general question of determining the least possible chromatic index of an $e$-star system remains open.  This paper is devoted to investigating this.

A trivial lower bound on the chromatic index of a $G$-design can be obtained by dividing the number of blocks by the largest possible size of a colour class (i.e.\ the maximum possible number of disjoint blocks).  For an $e$-star system, the maximum number of disjoint blocks is $\lfloor n/(e+1) \rfloor$, and the number of blocks is $n(n-1)/2e$.  Thus, for an $e$-star system $\mathcal{D}$ of order $n$, we have
\[ \chi'(\mathcal{D}) \geq \left\lceil \frac{n(n-1)}{2e} \middle/ \left\lfloor \frac{n}{e+1} \right\rfloor \right\rceil. \]
We will denote this lower bound by $L(n,e)$.  When the necessary conditions for the existence of a resolvable $e$-star system are satisfied, the floor and ceiling functions disappear, and we are left with the obvious formula for the number of parallel classes.

\subsection{Other notions of colouring}
We remark that there are a number of notions of colourings of designs, which generalize vertex- and edge-colourings of graphs; block-colourings, as considered in this paper, are a natural analogue of edge-colourings.  As a generalization of vertex-colouring, a $G$-design $(V,\mathcal{B})$ is said to be {\em weakly $k$-colourable} if $V$ can be partitioned into $k$ colour classes such that no subgraph in $\mathcal{B}$ is monochromatic; a $G$-design is {\em $k$-chromatic} if it is $k$-colourable but not $(k-1)$-colourable.  Weak colourings of $G$-designs have been studied for many classes of $G$-designs; in the case of $e$-star systems, these were considered in a 2020 paper of Pike and the first author~\cite{DarijaniPike}.

\section{Experimental results} \label{sec:computer}

To gain an understanding of any class of combinatorial objects, it is often desirable to consider small cases computationally.  The chromatic index of star systems is no exception to this; as such, we performed a number of computer experiments using the {\sf GAP} computer algebra system~\cite{GAP} to study small $3$-star systems, and determine their chromatic index.  We used two different approaches: the first obtained $3$-star systems as cliques in a suitably constructed graph; the second was to view $3$-star systems as a particular class of block designs, and use classification tools for those.  In both cases, we can then obtain the chromatic index of each $3$-star system obtained, by first forming its block intersection graph, and then finding the chromatic number of this graph.

\subsection{Clique-finding} \label{sec:cliques}

A commonly-used technique to construct combinatorial objects is to devise a graph, and then to search this graph for a clique which corresponds to the desired object (see the survey by \"Osterg{\aa}rd \cite{Ostergard05} for some examples of this).  We were able to use this approach successfully to obtain a complete classification of $3$-star systems of order~$9$.

We construct a graph $\Gamma$ as follows: the vertex set of $\Gamma$ will be the set of all possible $3$-stars on a set $9$ vertices, of which there are $9\cdot\binom{8}{3}=504$ ($9$ choices for the root vertex, and $\binom{8}{3}$ choices for the pendant vertices), and vertices in $\Gamma$ will be adjacent if and only if the corresponding $3$-stars are edge-disjoint.  This graph naturally admits a vertex-transitive action of the symmetric group $\Sym(9)$ (which, in fact, turns out to be the full automorphism group of $\Gamma$), as the vertex set consists of the images of elements of $\Sym(9)$ applied to the `canonical' $3$-star $\{1; 2,3,4\}$.  A clique in $\Gamma$ corresponds to a set of mutually edge-disjoint $3$-stars in $K_9$.  Furthermore, if there exists a clique of size $\frac{1}{3}\binom{9}{2}=12$, then this corresponds to a $3$-star system of order $9$ (so this is the maximum possible size of a clique in $\Gamma$).

Using the {\sf GRAPE} package~\cite{GRAPE} for the {\sf GAP} system, it is straightforward to construct the graph $\Gamma$.  Also, {\sf GRAPE} has an in-built command, {\footnotesize \texttt{CliquesOfGivenSize}}, to find cliques of a specified size in a given graph; in particular, it can enumerate representatives of all orbits on cliques for some prescribed group acting on the vertices.  We applied this command to the graph $\Gamma$, and found a total of $51,770$ $12$-cliques in $\Gamma$ arising from the action of $\Sym(9)$.  (This computation took approximately 75 minutes on an 3.20GHz Intel i7 CPU with 16GB RAM.)

The {\sf GRAPE} package also now includes an efficient function to determine the chromatic number of a given graph.  From a given $3$-star system $\mathcal{D}$, the {\em block-intersection graph} $\mathrm{BIG}(\mathcal{D})$ of $\mathcal{D}$ has the blocks (i.e.\ $3$-stars) of $\mathcal{D}$ as its vertices, and two blocks are adjacent if and only if their intersection is non-empty; a block-colouring of $\mathcal{D}$ is clearly equivalent to a proper vertex-colouring of $\mathrm{BIG}(\mathcal{D})$.  It is also straightforward to construct $\mathrm{BIG}(\mathcal{D})$ in {\sf GRAPE}; we did this for each of the $51,770$ $S_3(9)$ obtained above, and determined the chromatic index for each of them.  The results of this computation (which took around a further 30 minutes) are in Table~\ref{table:3star9} below.  The code used is given in Appendices A.1 and A.2.

\begin{table}[hbtp]
\centering
\renewcommand{\arraystretch}{1.25}
\begin{tabular}{ccc}
Chromatic index & \quad & \# systems \\ \hline
$8$   && $\phantom{9}2,192$ \\
$9$   && $12,221$ \\
$10$  && $21,420$ \\
$11$  && $13,352$ \\
$12$  && $\phantom{9}2,585$ \\ \hline
Total && $51,770$
\end{tabular}
\renewcommand{\arraystretch}{1.0}
\caption{Chromatic index for $3$-star systems of order $9$}
\label{table:3star9}
\end{table}

In particular, we determined that the least possible chromatic index for an $S_3(9)$ is $8$, as with the system in Example~\ref{3star9-colour}; the trivial lower bound is $L(9,3)=6$, so this bound is not achieved here.  Table~\ref{table:3star9} also shows that there exist examples of $S_3(9)$ where each block requires its own colour (as the chromatic index, $12$, is equal to the number of blocks); in other words, any two $3$-stars in such a system must have a vertex in common, as in Example~\ref{rainbow} below.

\begin{example} \label{rainbow}
The following are the blocks of a $12$-block-chromatic $S_3(9)$ with vertex set $\{1,\ldots,12\}$:
\renewcommand{\arraystretch}{1.25}
\[ \begin{array}{ccccccc}
\{ 1; 2, 3, 4 \} &\quad&
\{ 1; 5, 6, 7 \} &\quad&
\{ 2; 3, 4, 5 \} &\quad&
\{ 3; 4, 5, 6 \} \\
\{ 3; 7, 8, 9 \} &&
\{ 4; 5, 6, 7 \} &&
\{ 6; 2, 5, 7 \} &&
\{ 7; 2, 5, 8 \} \\
\{ 8; 1, 4, 5 \} &&
\{ 8; 2, 6, 9 \} &&
\{ 9; 1, 5, 6 \} &&
\{ 9; 2, 4, 7 \}
\end{array} \]
\renewcommand{\arraystretch}{1.0}
\end{example}

We remark that we did not test the systems we obtained for isomorphism, so the number of isomorphism classes of $3$-star systems with given chromatic index may be smaller than the number of orbit representatives listed in Table~\ref{table:3star9}.

\subsection{Classifying designs} \label{sec:soicher}
Unfortunately, the clique-finding approach of Section~\ref{sec:cliques} was not feasible in the next case, namely $3$-star systems of order $10$, as the size of the search space is too large, so a different approach was required.  The authors are indebted to Leonard Soicher for suggesting this method, which makes use of the {\sf DESIGN} package for {\sf GAP} \cite{DESIGN}.

A $3$-star system $\mathcal{D}$ of order $n$ can be interpreted as a block design $\mathcal{D}_E$, where the points of $\mathcal{D}_E$ are the edges of $K_n$, and each block of $\mathcal{D}_E$ consists of the three edges of a $3$-star in $\mathcal{D}$.  (In fact, any $G$-design may be interpreted in such a way.)  The value of this interpretation is that the {\sf DESIGN} package includes a function, {\footnotesize \texttt{BlockDesigns}}, for classifying designs up to isomorphism, which can be applied here.  This classification function is most effective when searching for designs invariant under some prescribed group of automorphisms; we applied it to classify $3$-star systems of admissible order $n$ for $10\leq n\leq 16$ invariant under certain cyclic groups of prime order $p\leq n$.  Once the systems were obtained, we could then find their chromatic index by using {\sf GRAPE} in the same way as in the previous section.  The code used is given in Appendices A.1 and A.3.

Our results are given in two tables.  First, in Table~\ref{table:invariant-systems} we give the number of systems $S_3(n)$ invariant under each group.  Note that, for a given prime $p$, the symmetric group $\Sym(n)$ has exactly $\lfloor n/p \rfloor$ conjugacy classes of subgroups of order $p$, corresponding to the possible cycle types of its non-identity elements (i.e.\ the number of disjoint $p$-cycles); each such subgroup may be distinguished by the order of its normalizer $N$ in $\Sym(n)$.  Second, in Table~\ref{table:chromatic} we will give the number of systems with each chromatic index for each order.

\begin{table}[hbtp]
\centering
\renewcommand{\arraystretch}{1.25}
\begin{tabular}{cccc}
$n$ & $p$ & $|N|$   & \# systems \\ \hline
10  &   2 & various & 0 \\
		&   3 & 30240   & 583 \\
		&   3 & 864     & 0 \\
    &   3 & 324     & 800 \\
		&   5 & 2400    & 0 \\ 
		&   5 & 200     & 40 \\ 
		&   7 & 252     & 0 \\ \hline
12  &   2 & various & 0 \\
    &   3 & 25920  & 49816 \\
		&   3 & 3888   & 0 \\
		&   3 & 1944   & 0 \\ 
    &   5 & 100800 & 708 \\
    &   5 & 400    & 0 \\
		&   7 & 5040   & 0 \\
		&  11 & 110    & 32 \\ \hline
13  &   5 & 1200   & 0 \\
    &  11 & 220    & 0 \\
		&  13 & 156    & 54 \\ \hline
15  &   3 & 23328  & 0 \\
    &   5 & 24000  & 0 \\
		&   7 & 588    & 89600 \\
		&  13 & 312    & 0 \\ \hline
16  
    &   7 & 1176   & 0 \\
		&  11 & 13200  & 0 \\
		&  13 & 936    & 0 \\ \hline
\end{tabular}
\renewcommand{\arraystretch}{1.0}
\caption{Numbers of $S_3(n)$ invariant under given cyclic groups of prime order, for $10\leq n\leq 16$}
\label{table:invariant-systems}
\end{table}

\begin{table}[hbtp]
\centering
\renewcommand{\arraystretch}{1.25}
\begin{tabular}{ccccc}
$n$ & \quad & Chromatic index & \quad & \# systems \\ \hline
10 &&  8 && 19 \\
   &&  9 && 389 \\
	 && 10 && 501 \\
	 && 11 && 41 \\
	 && 12 && 429 \\
	 && 13 && 15 \\
	 && 14 && 9 \\
	 && 15 && 20 \\ \hline
12 && 10 && 823 \\
   && 11 && 19953 \\
   && 12 && 22828 \\ 
	 && 13 && 5880 \\
	 && 14 && 1036 \\
	 && 15 && 36 \\ \hline
13 && 10 && 5 \\
   && 11 && 8 \\
	 && 12 && 0 \\
	 && 13 && 39 \\
	 && 14 && 2 \\ \hline
15 && 14 && 961 \\
   && 15 && 17 \\
	 && 16 && 14 \\
	 && 17 && 6 \\
	 && 18 && 2 \\ \hline
\end{tabular}
\renewcommand{\arraystretch}{1.0}
\caption{Chromatic index for $3$-star systems of order $n$, for $10\leq n\leq 15$.  For $n=15$, a random sample of $1000$ systems was chosen.}
\label{table:chromatic}
\end{table}

Some interesting observations can be made from these tables.  First, we note that the trivial lower bound is actually attained when $n=10$; we have $L(10,3)=8$, and we found $19$ examples of $S_3(10)$ with chromatic index $8$.  An example of such a system, along with an $8$-block colouring, is given in Example~\ref{optimal}.  However, we do not observe this for the other orders we considered (note that $L(12,3)=8$, $L(13,3)=9$, $L(15,3)=12$ and $L(16,3)=10$).  Also when $n=10$, the trivial upper bound is also achieved (as happened with $n=9$), so there exist $S_3(10)$ where any two blocks intersect; however, we did not observe this for larger orders (an $S_3(12)$ has $22$ blocks, and an $S_3(13)$ has $26$).  

\begin{example} \label{optimal}
The following are the colour classes of an $8$-block chromatic $S_3(10)$:
\[ \begin{array}{lll}
\big\{ \{ 1; 2,4,5 \},\, \{6; 7,8,10\} \big\} & \quad & \big\{ \{2; 3,4,5\},\, \{9; 6,8,10\} \big\} \\
\big\{ \{ 4; 5,6,7 \},\, \{10; 1,2,3\} \big\} & \quad & \big\{ \{4; 8,9,10\},\, \{6; 1,2,3\} \big\} \\
\big\{ \{ 5; 6,9,10 \},\, \{7; 1,2,3\} \big\} & \quad & \big\{ \{7; 5,9,10\},\, \{8; 1,2,3\} \big\} \\
\big\{ \{ 8; 5,7,10 \},\, \{9; 1,2,3\} \big\} & \quad & \big\{ \{3; 1,4,5\} \big\}
\end{array} \]
\end{example}

For each value of $n$, though, we find plenty of examples of chromatic index $n$ or less.  This suggests that $n$ may be an upper bound on the least possible chromatic index for an $S_3(n)$, or possibly for an $S_e(n)$ more generally (i.e.\ that there should exist $n$-block-colourable $S_e(n)$ for a fixed value of $e$ and any admissible value of $n$).


\section{An upper bound: the case of $e$-star systems} \label{sec:bound-e}
In this section, we investigate block-colourings of $e$-star systems for arbitrary $e \geq 3$. We show that, for all $n \equiv 0,1$ (mod $2e$), there exists an $e$-star system which admits a block colouring using either $n$ or $n-1$ colours; this gives an upper bound on the least possible chromatic index for $e$-star systems of order $n$ where $n \equiv 0,1$ (mod $2e$). In the theorems below, we give these constructions, which depend on congruence classes modulo $4e$.  We do not claim that the systems we construct have chromatic index $n$ or $n-1$, merely that they admit a colouring with that number of colours.  

We begin with the most fundamental case, namely when $n\equiv 0$ (mod $4e$); the other cases all involve extensions or adaptations of the construction given here.

\begin{thm}\label{cong0_2e_even}
For $n\equiv 0$ (mod $4e$), there exists an $(n-1)$-block-colourable $e$-star system of order $n$.
\end{thm}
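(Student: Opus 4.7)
The plan is to exhibit an explicit cyclic construction. Write $n = 4ek$ and take vertex set $V = \mathbb{Z}_{n-1} \cup \{\infty\}$, with $\mathbb{Z}_{n-1}$ acting by translation (fixing $\infty$).

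First, I would define $2k$ base $e$-stars. For $m = 1, 2, \ldots, 2k-1$, let
\[ B_m = \{0;\ (m-1)e+1,\ (m-1)e+2,\ \ldots,\ me\}, \]
and set
\[ B_{2k} = \{0;\ \infty,\ (2k-1)e+1,\ (2k-1)e+2,\ \ldots,\ 2ke-1\}. \]
The differences in $\mathbb{Z}_{n-1}$ arising from the edges of these stars cover $\{1, 2, \ldots, 2ke-1\} = \{1, \ldots, (n-2)/2\}$ exactly once, together with the single $\infty$-edge from $B_{2k}$. Hence the $\mathbb{Z}_{n-1}$-orbits of $B_1, \ldots, B_{2k}$ decompose $K_n$ into $2k(n-1) = n(n-1)/(2e)$ copies of $K_{1,e}$, giving an $e$-star system of order $n$.

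For the $(n-1)$-block-colouring, I would construct a single vertex-disjoint \emph{base colour class} $C_0 = \{B_m + f_m : 1 \leq m \leq 2k\}$ by choosing shifts $f_m \in \mathbb{Z}_{n-1}$, and then set $C_i = C_0 + i$ for each $i \in \mathbb{Z}_{n-1}$. This gives $n-1$ colour classes; each $C_i$ consists of $2k$ pairwise vertex-disjoint blocks (since translation preserves disjointness and fixes $\infty$), and every block of the design lies in exactly one $C_i$ because within each of the $2k$ orbits the shifts run bijectively over $\mathbb{Z}_{n-1}$ as $i$ varies. A natural choice is $f_1 = 0$ and $f_m = e+m-1$ for $m \geq 2$: then $B_1 + 0$ occupies the interval $[0, e]$, and for $m \geq 2$, the shifted block $B_m + f_m$ has root $e+m-1$ and pendants at the consecutive positions $[m(e+1), m(e+1)+e-1]$ (with $\infty$ replacing one pendant if $m = 2k$). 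The pendant intervals are automatically disjoint, and the roots $\{e+1, e+2, \ldots, e+2k-1\}$ lie in the gap $[e+1, 2e+1]$ between the pendants of $B_1$ and $B_2$, so they avoid every pendant, provided $k \leq (e+2)/2$.

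The main obstacle is handling the regime $k > (e+2)/2$, where the $2k-1$ roots no longer fit in the initial gap and begin to collide with the pendant interval of $B_2$. I would deal with this by relocating the excess roots for $m > e+2$ into the single-position gaps between successive pendant intervals (the gap between $[m(e+1), m(e+1)+e-1]$ and $[(m+1)(e+1), (m+1)(e+1)+e-1]$ is the singleton $\{m(e+1)+e\}$, accommodating one root each) or, equivalently, by re-choosing $f_m$ so that both the root $f_m$ and the pendant interval $[f_m + (m-1)e+1, f_m + me]$ land in the free region past the last pendant interval (of size $2k(e-1)-e$) and the aforementioned single-position gaps. A direct count shows that $\mathbb{Z}_{n-1}$ contains exactly $2k(e-1)$ positions not used by the roots and pendants of a vertex-disjoint $C_0$, which matches the combined "slack" needed for the relocation; verifying the placement in full generality is a routine but slightly tedious case analysis depending on $k \bmod (e+1)$, with no new conceptual input.
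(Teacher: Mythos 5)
Your overall strategy is genuinely different from the paper's (which partitions the $n$ points into $2t$ groups of size $2e$, places an $S_e(2e)$ on each group, and colours the cross edges via a $1$-factorization of $K_{2t}$, using $2e$ colours per $1$-factor plus $2e-1$ colours for the subsystems). Your cyclic design itself is correct: the base stars cover each difference of $\mathbb{Z}_{n-1}$ exactly once together with one $\infty$-edge, so their translates form an $S_e(n)$; and your observation that a single vertex-disjoint transversal $C_0$ of the $2k$ orbits would immediately yield an $(n-1)$-block-colouring by translation is sound. The entire weight of the proof therefore rests on exhibiting such a $C_0$, and that is where there is a genuine gap.

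Your explicit shifts $f_1=0$, $f_m=e+m-1$ only work when $2k\le e+2$, i.e.\ $n\le 2e(e+2)$, as you yourself note. For larger $k$ the proposed repair does not go through as described. You cannot ``relocate the excess roots into the single-position gaps'' independently of anything else: the root of piece $m$ and its pendant interval are rigidly linked at offset $(m-1)e+1$, which grows to roughly $(n-2)/2$, so moving a root drags a length-$e$ interval far away with it. The alternative you state --- choosing $f_m$ so that both the root and the pendant interval land in free space --- is precisely the problem that needs solving, and your counting remark (that the $2k(e-1)$ free positions match the total slack) only shows that space is not an obstruction; it does not produce a placement. A greedy argument also fails here, since each already-placed piece forbids up to $(e+1)^2$ values of the next shift and $(2k-1)(e+1)^2$ can exceed $n-1$. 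Experimentation suggests a valid placement does exist in general (e.g.\ placing the roots of the excess pieces at consecutive free positions makes their pendant intervals, whose offsets grow by $e$ at each step, advance by $e+1$ and hence stay disjoint), but this must be written down precisely and checked, including the absence of wrap-around collisions; as it stands, the assertion that the remaining case analysis is ``routine'' is unsubstantiated and the proof is incomplete.
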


\begin{proof}
Let $n=4et$, where $t\geq 1$. Let $V=\{v_1^1,\ldots,v_{2e}^1,v_1^2,\ldots,v_{2e}^2,\ldots,v_1^{2t},\ldots,v_{2e}^{2t}\}$ be the set of points. Partition $V$ into $2t$ subsets $V_1=\{v_1^1,\ldots,v_{2e}^1\}$, $V_2=\{v_1^2,\ldots,v_{2e}^2\}$,$\ldots$, $V_{2t}=\{v_1^{2t},\ldots,v_{2e}^{2t}\}$ of size $2e$.  On each $V_i$ (for $1\leq i\leq 2t$), we place a copy of an $e$-star system $(V_i,\mathcal B_i)$ of order $2e$, where $\mathcal B_i=\{B_i^1,\ldots,B_i^{2e-1}\}$; this is necessarily $(2e-1)$-block-chromatic. 

Next, we construct a complete graph $K_{2t}$ in which the vertices are $V_1,\ldots,V_{2t}$. Since $2t$ is even, $K_{2t}$ admits a 1-factorization, and hence we can partition the set of all pairs of subsets $V_1,\ldots,V_{2t}$ into $2t-1$ $1$-factors $F_1,\ldots,F_{2t-1}$. Without loss of generality, we may assume that $F_1=\big\{(V_1,V_2),(V_3,V_4),\ldots,(V_{2t-1},V_{2t})\big\}$.  Furthermore, we will assume that each pair is ordered.  Then for each $1\leq j\leq t$, we form a collection of pairs of $e$-stars as follows:%
\renewcommand{\arraystretch}{1.25}%
\[ \begin{array}{l}
\mathcal F_{2j-1}^1=\big\{ \{v_1^{2j-1};v_1^{2j},\ldots,v_e^{2j}\}, \{v_2^{2j-1};v_{e+1}^{2j},\ldots,v_{2e}^{2j}\}\big\},\\ 
\mathcal F_{2j-1}^2=\big\{\{v_1^{2j-1};v_{e+1}^{2j},\ldots,v_{2e}^{2j}\}, \{v_2^{2j-1};v_1^{2j},\ldots,v_e^{2j}\}\big\},\\
\phantom{\mathcal F_{2j-1}^2}\, \vdots \\
\mathcal F_{2j-1}^{2e-1}=\big\{ \{v_{2e-1}^{2j-1};v_{1}^{2j},\ldots,v_e^{2j}\}, \{v_{2e}^{2j-1};v_{e+1}^{2j},\ldots,v_{2e}^{2j}\}\big\},\\
\mathcal F_{2j-1}^{2e}=\big\{\{v_{2e-1}^{2j-1};v_{e+1}^{2j},\ldots,v_{2e}^{2j}\}, \{v_{2e}^{2j-1};v_1^{2j},\ldots,v_e^{2j}\}\big\}.
\end{array} \]%
\renewcommand{\arraystretch}{1.25}%
Then the edges between pairs of subsets in the $1$-factor $F_1$ can be partitioned into $2e$ colour classes 
\[ \mathcal C_1^1=\bigcup\limits_{j=1}^{t} \mathcal F_{2j-1}^1,\quad
   \mathcal C_1^2=\bigcup\limits_{j=1}^{t} \mathcal F_{2j-1}^2,\quad
   \ldots, \quad 
   \mathcal C_1^{2e}=\bigcup\limits_{j=1}^{t} \mathcal F_{2j-1}^{2e}. 
\]
For each $i$ where $2\leq i\leq 2t-1$, the edges between pairs of subsets in the $1$-factor $F_i$ can be partitioned into $2e$ colour classes $\mathcal C_i^2,\ldots, \mathcal C_i^{2e}$ in a similar manner.  This uses a total of $2e(2t-1)$ distinct colours.

It remains to assign colours to the $e$-stars in each $\mathcal{B}_i$.  Since these form a collection of $e$-star systems of order $2e$ which are mutually disjoint, we may use the same set of $2e-1$ colours each time.  So, for each $k$ where $1\leq k\leq 2e-1$, we define colour classes $\mathcal D^k=\{B_1^k,\ldots,B_{2t}^k\}$.

Pulling all of this together, we have an $e$-star system $(V,\mathcal B)$ of order $4et$, where $\mathcal B=\big(\bigcup\limits_{i=1}^{2t} \mathcal B_i\big) \cup \big(\bigcup_{i=1}^{2t-1} \mathcal C_i^1\big) \cup \cdots \big(\bigcup_{i=1}^{2t-1} \mathcal C_i^{2e}\big)$, with $2e(2t-1)+2e-1 = 4et-1$ colour classes, $\mathcal D^1, \ldots, \mathcal D^{2e-1}, \mathcal C_1^1,\ldots, \mathcal C_1^{2e},\ldots, \mathcal C_{2t-1}^1,\ldots, \mathcal C_{2t-1}^{2e}.$  This completes the proof.
\end{proof}

To illustrate this construction, we give the example below.

\begin{example}
We will construct a $23$-block-colourable $3$-star system of order $24$ using the method of Theorem~\ref{cong0_2e_even}. Let $V=\{1,\ldots,24\}$ be the set of points. We partition $V$ into four subsets $V_1=\{1,\ldots,6\}$, $V_2=\{7,\ldots,12\}$, $V_3=\{13,\ldots,18\}$, and $V_4=\{19,\ldots,24\}$. For each $i$ where $1\leq i\leq 4$, we place a copy of the $3$-star system of order $6$ given in Example~\ref{3star6}, which is $5$-block-chromatic; label the blocks of these as $\mathcal B_i=\{B_i^1,B_i^2,B_i^3,B_i^4,B_i^5\}$.

Next, we construct a complete graph $K_4$ with vertices $V_1,V_2,V_3,V_4$.
\begin{center}
 \begin{tikzpicture}[x=1.2cm, y=1.2cm]
 \tikzstyle{vertex}=[circle, draw, inner sep=0pt, minimum size=4pt]	
    \vertex (c1) at (90:1) [label=90:${V_1}$, fill=black]{};
 	\vertex (c4) at (180:1) [label=180:${V_4}$, fill=black]{};
        \vertex (c3) at (270:1) [label=270:${V_3}$, fill=black]{};
        \vertex (c2) at (360:1) [label=360:${V_2}$, fill=black]{};
        	
 \path [line width=1pt]  
	(c1) edge (c2)
	(c1) edge (c3)
	(c1) edge (c4)
	(c2) edge (c3)
	(c2) edge (c4)
	(c3) edge (c4);		
\end{tikzpicture} 
\end{center}
This $K_4$ admits a $1$-factorization with $1$-factors $F_1,F_2,F_3$, where $F_1=\{(V_1,V_2),(V_3,V_4)\}$, $F_2=\{(V_1,V_3),(V_2,V_4)\}$, and $F_3=\{(V_1,V_4),(V_2,V_3)\}$. The edges between $V_1$ and $V_2$ in $F_1$ can be partitioned into six colour classes $\mathcal F_1^1, \ldots, \mathcal F_1^6$ as depicted below.
\vspace{0.5cm}

\begin{tikzpicture}[scale=0.55]
  
   \tikzstyle{vertex}=[circle, draw, inner sep=0pt, minimum size=3pt, fill=black]
      
   \vertex (n6) at (1,2)[label=0:$6$] {};
  \vertex (n5) at (1,4) [label=0:$5$] {};
  \vertex (n4) at (1,6) [label=0:$4$] {};
  \vertex (n3) at (1,8)[label=0:$3$] {};
 \vertex (n2) at (1,10)[label=0:$2$]  {};
  \vertex (n1) at (1,12)[label=0:$1$]{};

   \vertex (n12) at (6,2)[label=0:$12$] {};
  \vertex (n11) at (6,4) [label=0:$11$] {};
  \vertex (n10) at (6,6) [label=0:$10$] {};
  \vertex (n9) at (6,8)[label=0:$9$] {};
 \vertex (n8) at (6,10)[label=0:$8$]  {};
  \vertex (n7) at (6,12)[label=0:$7$]{};

  \path[thick] 
  (n1) edge (n7)
  (n1) edge (n8)
  (n1) edge (n9)
  (n2) edge[dashed] (n10)
  (n2) edge[dashed] (n11)
  (n2) edge[dashed] (n12);
 
  \end{tikzpicture} \hspace{1.5cm}
\begin{tikzpicture}[scale=0.55]
  
   \tikzstyle{vertex}=[circle, draw, inner sep=0pt, minimum size=3pt, fill=black]
      
   \vertex (n6) at (1,2)[label=0:$6$] {};
  \vertex (n5) at (1,4) [label=0:$5$] {};
  \vertex (n4) at (1,6) [label=0:$4$] {};
  \vertex (n3) at (1,8)[label=0:$3$] {};
 \vertex (n2) at (1,10)[label=0:$2$]  {};
  \vertex (n1) at (1,12)[label=0:$1$]{};
      
   \vertex (n12) at (6,2)[label=0:$12$] {};
  \vertex (n11) at (6,4) [label=0:$11$] {};
  \vertex (n10) at (6,6) [label=0:$10$] {};
  \vertex (n9) at (6,8)[label=0:$9$] {};
 \vertex (n8) at (6,10)[label=0:$8$]  {};
  \vertex (n7) at (6,12)[label=0:$7$]{};

  \path[thick] 
  (n1) edge (n10)
  (n1) edge (n11)
  (n1) edge (n12)
  (n2) edge[dashed] (n7)
  (n2) edge[dashed] (n8)
  (n2) edge[dashed] (n9);
 
  \end{tikzpicture} \hspace{1.5cm}
\begin{tikzpicture}[scale=0.55]
  
   \tikzstyle{vertex}=[circle, draw, inner sep=0pt, minimum size=3pt, fill=black]
      
   \vertex (n6) at (1,2)[label=0:$6$] {};
  \vertex (n5) at (1,4) [label=0:$5$] {};
  \vertex (n4) at (1,6) [label=0:$4$] {};
  \vertex (n3) at (1,8)[label=0:$3$] {};
 \vertex (n2) at (1,10)[label=0:$2$]  {};
  \vertex (n1) at (1,12)[label=0:$1$]{};
      
   \vertex (n12) at (6,2)[label=0:$12$] {};
  \vertex (n11) at (6,4) [label=0:$11$] {};
  \vertex (n10) at (6,6) [label=0:$10$] {};
  \vertex (n9) at (6,8)[label=0:$9$] {};
 \vertex (n8) at (6,10)[label=0:$8$]  {};
  \vertex (n7) at (6,12)[label=0:$7$]{};

  \path[thick] 
  (n3) edge (n7)
  (n3) edge (n8)
  (n3) edge (n9)
  (n4) edge[dashed] (n10)
  (n4) edge [dashed](n11)
  (n4) edge[dashed] (n12);
 
  \end{tikzpicture}
 \vspace{1.5cm} 
  
 \begin{tikzpicture}[scale=0.55]
  
   \tikzstyle{vertex}=[circle, draw, inner sep=0pt, minimum size=3pt, fill=black]
      
   \vertex (n6) at (1,2)[label=0:$6$] {};
  \vertex (n5) at (1,4) [label=0:$5$] {};
  \vertex (n4) at (1,6) [label=0:$4$] {};
  \vertex (n3) at (1,8)[label=0:$3$] {};
 \vertex (n2) at (1,10)[label=0:$2$]  {};
  \vertex (n1) at (1,12)[label=0:$1$]{};
      
   \vertex (n12) at (6,2)[label=0:$12$] {};
  \vertex (n11) at (6,4) [label=0:$11$] {};
  \vertex (n10) at (6,6) [label=0:$10$] {};
  \vertex (n9) at (6,8)[label=0:$9$] {};
 \vertex (n8) at (6,10)[label=0:$8$]  {};
  \vertex (n7) at (6,12)[label=0:$7$]{};

  \path[thick] 
  (n3) edge (n10)
  (n3) edge (n11)
  (n3) edge (n12)
  (n4) edge[dashed](n7)
  (n4) edge[dashed] (n8)
  (n4) edge[dashed] (n9);
 
  \end{tikzpicture} \hspace{1.5cm}  
\begin{tikzpicture}[scale=0.55]
  
   \tikzstyle{vertex}=[circle, draw, inner sep=0pt, minimum size=3pt, fill=black]
      
   \vertex (n6) at (1,2)[label=0:$6$] {};
  \vertex (n5) at (1,4) [label=0:$5$] {};
  \vertex (n4) at (1,6) [label=0:$4$] {};
  \vertex (n3) at (1,8)[label=0:$3$] {};
 \vertex (n2) at (1,10)[label=0:$2$]  {};
  \vertex (n1) at (1,12)[label=0:$1$]{};
      
   \vertex (n12) at (6,2)[label=0:$12$] {};
  \vertex (n11) at (6,4) [label=0:$11$] {};
  \vertex (n10) at (6,6) [label=0:$10$] {};
  \vertex (n9) at (6,8)[label=0:$9$] {};
 \vertex (n8) at (6,10)[label=0:$8$]  {};
  \vertex (n7) at (6,12)[label=0:$7$]{};

  \path[thick] 
  (n6) edge[dashed] (n10)
  (n6) edge[dashed] (n11)
  (n6) edge[dashed] (n12)
  (n5) edge (n7)
  (n5) edge (n8)
  (n5) edge (n9);
 
  \end{tikzpicture} \hspace{1.5cm}
  \begin{tikzpicture}[scale=0.55]
  
   \tikzstyle{vertex}=[circle, draw, inner sep=0pt, minimum size=3pt, fill=black]
      
   \vertex (n6) at (1,2)[label=0:$6$] {};
  \vertex (n5) at (1,4) [label=0:$5$] {};
  \vertex (n4) at (1,6) [label=0:$4$] {};
  \vertex (n3) at (1,8)[label=0:$3$] {};
 \vertex (n2) at (1,10)[label=0:$2$]  {};
  \vertex (n1) at (1,12)[label=0:$1$]{};
      
   \vertex (n12) at (6,2)[label=0:$12$] {};
  \vertex (n11) at (6,4) [label=0:$11$] {};
  \vertex (n10) at (6,6) [label=0:$10$] {};
  \vertex (n9) at (6,8)[label=0:$9$] {};
 \vertex (n8) at (6,10)[label=0:$8$]  {};
  \vertex (n7) at (6,12)[label=0:$7$]{};

  \path[thick] 
  (n5) edge (n10)
  (n5) edge (n11)
  (n5) edge (n12)
  (n6) edge[dashed] (n7)
  (n6) edge[dashed] (n8)
  (n6) edge [dashed](n9);
  \end{tikzpicture} 
  \vspace{0.5cm}
  
Likewise, we can partition the edges between $V_3$ and $V_4$ into six colour classes $\mathcal F_3^1,\ldots, \mathcal F_3^6$ in a similar manner. Then the edges between pairs of subsets in the $1$-factor $F_1$ are partitioned into six colour classes $\mathcal C_1^1=\mathcal F_1^1 \cup \mathcal F_3^1,\ldots,\mathcal C_1^6=\mathcal F_1^6 \cup \mathcal F_3^6$.  Using the same approach, we obtain another six colour classes, $\mathcal C_2^1, \ldots,\mathcal C_2^6$, from the $1$-factor $F_2$, and a further six colour classes, $\mathcal C_3^1, \ldots,\mathcal C_3^6$, from the $1$-factor $F_3$.

This leaves the four copies of the $3$-star system of order $6$.  For each $k$ where $1\leq k\leq 5$, we let $\mathcal D^k =\{B_1^k,B_2^k,B_3^k,B_4^k\}$.  For instance, $\mathcal D_1$ is shown below:\\

\begin{tikzpicture}[x=1cm, y=1cm]
 \tikzstyle{vertex}=[circle, draw, inner sep=0pt, minimum size=3pt]	
    \vertex (c1) at (60:1) [label=60:${1}$, fill=black]{};
 	\vertex (c6) at (120:1) [label=120:${6}$, fill=black]{};
        \vertex (c5) at (180:1) [label=180:${5}$, fill=black]{};
       
        \vertex (c4) at (240:1) [label=240:${4}$, fill=black]{};
        \vertex (c3) at (300:1) [label=300:${3}$, fill=black]{};
        \vertex (c2) at (360:1) [label=360:${2}$, fill=black]{};
		
 \path [line width=1.3pt]  
	(c1) edge (c3)
	(c1) edge (c5)
	(c1) edge (c6);	
\end{tikzpicture} \hspace{0.3cm}
\begin{tikzpicture}[x=1cm, y=1cm]
 \tikzstyle{vertex}=[circle, draw, inner sep=0pt, minimum size=3pt]	
    \vertex (c1) at (60:1) [label=60:${7}$, fill=black]{};
 	\vertex (c6) at (120:1) [label=120:${12}$, fill=black]{};
        \vertex (c5) at (180:1) [label=180:${11}$, fill=black]{};
        \vertex (c4) at (240:1) [label=240:${10}$, fill=black]{};
        \vertex (c3) at (300:1) [label=300:${9}$, fill=black]{};
        \vertex (c2) at (360:1) [label=360:${8}$, fill=black]{};
		
 \path [line width=1.3pt]  
	(c1) edge (c3)
	(c1) edge (c5)
	(c1) edge (c6);	
\end{tikzpicture} \hspace{0.3cm}
\begin{tikzpicture}[x=1cm, y=1cm]
 \tikzstyle{vertex}=[circle, draw, inner sep=0pt, minimum size=3pt]	
    \vertex (c1) at (60:1) [label=60:${13}$, fill=black]{};
 	\vertex (c6) at (120:1) [label=120:${18}$, fill=black]{};
        \vertex (c5) at (180:1) [label=180:${17}$, fill=black]{};
        \vertex (c4) at (240:1) [label=240:${16}$, fill=black]{};
        \vertex (c3) at (300:1) [label=300:${15}$, fill=black]{};
        \vertex (c2) at (360:1) [label=360:${14}$, fill=black]{};
		
 \path [line width=1.3pt]  
	(c1) edge (c3)
	(c1) edge (c5)
	(c1) edge (c6);	
\end{tikzpicture} \hspace{0.3cm}
\begin{tikzpicture}[x=1cm, y=1cm]
 \tikzstyle{vertex}=[circle, draw, inner sep=0pt, minimum size=3pt]	
    \vertex (c1) at (60:1) [label=60:${19}$, fill=black]{};
 	\vertex (c6) at (120:1) [label=120:${24}$, fill=black]{};
        \vertex (c5) at (180:1) [label=180:${23}$, fill=black]{};
        \vertex (c4) at (240:1) [label=240:${22}$, fill=black]{};
        \vertex (c3) at (300:1) [label=300:${21}$, fill=black]{};
        \vertex (c2) at (360:1) [label=360:${20}$, fill=black]{};
		
 \path [line width=1.3pt]  
	(c1) edge (c3)
	(c1) edge (c5)
	(c1) edge (c6);	
\end{tikzpicture}

Then $(V,\mathcal B)$, where $\mathcal B=\big(\bigcup\limits_{i=1}^{4}\mathcal B_i\big) \cup \big(\bigcup\limits_{i=1}^{3}\mathcal C_i^1\big) \cup \ldots \cup \big(\bigcup\limits_{i=1}^{3}\mathcal C_i^6\big)$ is a $23$-block-colourable $3$-star system of order $24$ with colour classes $\mathcal D^1,\ldots,\mathcal D^5, \mathcal C_1^1,\ldots,\mathcal C_1^6, \mathcal C_2^1,\ldots,\mathcal C_2^6,  \mathcal C_3^1,\ldots,\mathcal C_3^6$.
\end{example}

Next, we will consider the case where $n\equiv 1$ (mod $4e$).  Here, we will make use of a block colouring of an $e$-star system of order $n-1\equiv 0$ (mod $4e$) as obtained in Theorem~\ref{cong0_2e_even}, and extend it to obtain a system of order $n\equiv 1$ (mod $4e$) by adding an additional point.

\begin{thm} \label{cong1_even}
For $n\equiv 1$ (mod $4e$), there exists an $n$-block-colourable $e$-star system of order $n$.
\end{thm}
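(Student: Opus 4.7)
The plan is to extend the construction of Theorem~\ref{cong0_2e_even} by adjoining a single new point, reusing almost all of the existing cross colour classes, at the cost of at most two new colours.

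Write $n = 4et + 1$ with $t \geq 1$. I begin with an $S_e(n-1)$ on $V := V_1 \cup \cdots \cup V_{2t}$ supplied by Theorem~\ref{cong0_2e_even}, with $4et - 1$ colour classes $\mathcal{D}^1,\ldots,\mathcal{D}^{2e-1}$ and $\mathcal{C}_r^\ell$ ($1 \leq r \leq 2t-1$, $1 \leq \ell \leq 2e$). The proof of that theorem is flexible in its choice of $1$-factorisation of $K_{2t}$ and of root-side for each pair, so I pick the standard round-robin factorisation so that, for each $r \in \{1,\ldots,2t-1\}$, the factor $F_r$ contains the pair $(V_r,V_{2t})$, and declare $V_r$ to be the root side of that pair. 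Consequently, $V_{2t}$ is on the pendant side of every $F_r$.

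Now adjoin a new point $\infty$. Every new $e$-star must be rooted at $\infty$, since a star with $\infty$ as a pendant would include $e - 1$ edges lying entirely within $V$, and these are already covered. So I take the $4t$ new blocks to be
\[
S_i^+ = \{\infty;\, v_1^i,\ldots,v_e^i\}, \qquad S_i^- = \{\infty;\, v_{e+1}^i,\ldots,v_{2e}^i\} \qquad (1 \leq i \leq 2t),
\]
whose pendants partition $V$; these all share $\infty$, so they need pairwise distinct colours. The key observation is that if $V_i$ is on the root side of its pair in $F_r$, then the class $\mathcal{C}_r^\ell$ meets $V_i$ only at the two roots $v_{2k-1}^i, v_{2k}^i$ of that pair, where $\ell \in \{2k-1,2k\}$. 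Hence $S_i^+$ is compatible with $\mathcal{C}_r^\ell$ precisely when both of those roots lie in the ``second half'' $\{v_{e+1}^i,\ldots,v_{2e}^i\}$ of $V_i$, i.e., $\ell \in \{e+1,\ldots,2e\}$ when $e$ is even and $\ell \in \{e+2,\ldots,2e\}$ when $e$ is odd; a symmetric range serves $S_i^-$. The diagonal classes $\mathcal{D}^k$ are of no use, since each block $B_i^k$ already occupies $e+1$ of the $2e$ vertices of $V_i$, leaving fewer than $e$ free for an $\infty$-star's pendants.

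I therefore assign, for each $i \in \{1,\ldots,2t-1\}$,
\[
S_i^+ \longmapsto \mathcal{C}_i^{\ell^+}, \qquad S_i^- \longmapsto \mathcal{C}_i^{\ell^-},
\]
picking $\ell^+ = e+1$ when $e$ is even (or $e+2$ when odd) and $\ell^- = 1$. No two stars share a colour, because distinct indices $i$ draw from distinct factors $F_i$ and $\ell^+ \neq \ell^-$. Since $V_{2t}$ is always on the pendant side, neither $S_{2t}^+$ nor $S_{2t}^-$ has a compatible cross class, so these receive two brand-new colour classes $\mathcal{N}^+, \mathcal{N}^-$, bringing the total to $(4et - 1) + 2 = n$ colours as required.

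The main obstacle is verifying that each extended class $\mathcal{C}_i^{\ell^\pm} \cup \{S_i^\pm\}$ remains vertex-disjoint. This reduces to two routine observations: first, $V_i$ appears in $F_i$ only through the pair $(V_i, V_{2t})$, so every other star in $\mathcal{C}_i^{\ell^\pm}$ lies entirely on $V_j$ for $j \neq i, 2t$ and poses no conflict with $S_i^\pm$; second, the choices of $\ell^+$ and $\ell^-$ place the two vertices of $V_i$ used by $\mathcal{C}_i^{\ell^\pm}$ in the opposite half of $V_i$ from the pendants of $S_i^\pm$. Extra care is needed for odd $e$, where the middle cross colours $\mathcal{C}_r^e$ and $\mathcal{C}_r^{e+1}$ are unusable because their root pairs straddle both halves of $V_i$; this is why the compatible ranges shrink by one for odd $e$, but they remain nonempty for $e \geq 3$.
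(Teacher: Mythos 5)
Your proposal is correct and follows essentially the same route as the paper: extend the order-$(n-1)$ system of Theorem~\ref{cong0_2e_even} by one point, split the new edges into two $e$-stars per part $V_i$, absorb all but the two stars on $V_{2t}$ into existing cross colour classes whose roots in $V_i$ lie in the opposite half, and spend two new colours on the remainder. The only (immaterial) differences are your explicit round-robin choice of $1$-factorization and your choice of which two classes $\mathcal C_i^{\ell}$ to reuse --- the paper takes $\ell=1$ and $\ell=2e$, which avoids your parity case split.
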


\begin{proof}
Let $n=4et+1$, where $t\geq 1$. Let $V=V'\cup\{x\}$ be the set of points where $V'=\{v_1^1,\ldots,v_{2e}^1,v_1^2,\ldots,v_{2e}^2,$ $\ldots,v_1^{2t},\ldots,v_{2e}^{2t}\}$.  Partition $V'$ into $2t$ subsets $V_1=\{v_1^1,\ldots,v_{2e}^1\},\ldots,\linebreak V_{2t}=\{v_1^{2t},\ldots,v_{2e}^{2t}\}$ of size $2e$. Let $(V',\mathcal B')$ be the $(4et-1)$-block-colourable $e$-star system of order $4et$ constructed in the proof of Theorem~\ref{cong0_2e_even}, with colour classes $\mathcal D^1, \ldots, \mathcal D^{2e-1}$, $\mathcal C_1^1,\ldots, \mathcal C_1^{2e},$ $\ldots$, $\mathcal C_{2t-1}^1,\ldots, \mathcal C_{2t-1}^{2e}$.

The next step is to decompose the edges between the point $x$ and the subsets $V_1,\ldots,V_{2t}$ into $e$-stars and colour these $e$-stars. For each $i$ where $1\leq i\leq 2t-1$, we decompose the edges between the point $x$ and the subset $V_i$ into $e$-stars $\{x;v_1^i,\ldots,v_e^i\}$ and $\{x;v_{e+1}^i,\ldots,v_{2e}^i\}$. 
To colour these, we reuse the colours associated with the $1$-factor $F_{i}$ from the proof of Theorem~\ref{cong0_2e_even}. 
In the $1$-factor $F_{i}$, take $V_i$ to be the first element of the ordered pair $(V_i,V_j)\in F_{i}$.  With this choice, we can see from the construction in the proof of Theorem~\ref{cong0_2e_even} that the $e$-stars $\{x;v_1^i,\ldots,v_{e}^i\}$ and $\{x;v_{e+1}^i,\ldots,v_{2e}^i\}$ do not have any intersection with the points in the colour classes $C_{i}^{2e}$ and $C_{i}^1$ respectively. Therefore, for each $i$ where $1\leq i\leq 2t-1$, we let $(\mathcal C_i^1)'=\mathcal C_i^1 \cup \big\{\{x;v_{e+1}^i,\ldots,v_{2e}^i\}\big\}$ and $(\mathcal C_i^{2e})'=\mathcal C_i^{2e} \cup\big\{ \{x;v_1^i,\ldots,v_{e}^i\}\big\}$. 

It only remains to decompose the edges between the point $x$ and the subset $V_{2t}$ into $e$-stars and colour these. We decompose these edges into $e$-stars $\{x;v_1^{2t},\ldots,v_e^{2t}\}$ and $\{x;v_{e+1}^{2t},\ldots,v_{2e}^{2t}\}$. To colour these $e$-stars, we define two new colour classes $\mathcal A_1=\big\{\{x;v_1^{2t},\ldots,v_e^{2t}\}\big\}$ and $\mathcal A_2=\big\{\{x;v_{e+1}^{2t},\ldots,v_{2e}^{2t}\}\big\}$.

Therefore $(V,\mathcal B)$ where $\mathcal B=\mathcal B' \cup \big(\bigcup\limits_{i=1}^{2t} \big\{\{x;v_1^i,\ldots,v_e^i\}, \{x;v_{e+1}^i,\ldots,v_{2e}^i\}\big\}\big)$ is a $(4et+1)$-block-colourable $e$-star system of order $4et+1$, with colour classes $\mathcal D^1, \ldots, \mathcal D^{2e-1}$, $(\mathcal C_1^1)',\mathcal C_1^2,\ldots,\linebreak \mathcal C_1^{2e-1}, (\mathcal C_1^{2e})', \ldots, (\mathcal C_{2t-1}^1)', \mathcal C_{2t-1}^2, \ldots, \mathcal C_{2t-1}^{2e-1}, (\mathcal C_{2t-1}^{2e})',\mathcal A_1, \mathcal A_2$.
\end{proof}

Our next result considers the case where $n\equiv 2e$ (mod $4e$).  Here, we will adapt the proof of Theorem~\ref{cong0_2e_even}, but will use a near $1$-factorization instead of a $1$-factorization.

\begin{thm}\label{cong0_odd}
For $n\equiv 2e$ (mod $4e$), there exists an $n$-block-colourable $e$-star system of order $n$, except for $n=2e$ where all $e$-star systems of order $2e$ are trivially $(n-1)$-block-chromatic.
\end{thm}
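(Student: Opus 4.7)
My plan is to adapt the construction in Theorem~\ref{cong0_2e_even}, replacing the $1$-factorization of $K_{2t}$ by a near-$1$-factorization of $K_{2t+1}$, which exists since $2t+1$ is odd (via the usual round-robin construction). Writing $n = 2e(2t+1) = 4et + 2e$ with $t \geq 1$ (the case $t=0$, i.e.\ $n = 2e$, is the excluded one), I would partition $V$ into $2t+1$ subsets $V_1, \ldots, V_{2t+1}$ of size $2e$, and, as before, place on each $V_i$ a copy of an $e$-star system of order $2e$ with blocks $B_i^1, \ldots, B_i^{2e-1}$ (which is necessarily $(2e-1)$-block-chromatic).

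Next, I would let $F_1, \ldots, F_{2t+1}$ be the near-$1$-factors of a near-$1$-factorization of $K_{2t+1}$, indexed so that $F_r$ omits the vertex $V_r$. For each $F_r$ (which consists of $t$ pairs of subsets), I would decompose the $K_{2e,2e}$ on each pair and colour the resulting $e$-stars exactly as in Theorem~\ref{cong0_2e_even}, yielding $2e$ colour classes $\mathcal{C}_r^1, \ldots, \mathcal{C}_r^{2e}$ for each $r$. The key observation driving the whole argument is that every star in every $\mathcal{C}_r^k$ lies in $\bigcup_{i \neq r} V_i$, so no block coloured $\mathcal{C}_r^k$ uses any vertex of $V_r$.

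This gives a total of $(2t+1)(2e) = n$ colour classes for the cross-edges, but the internal blocks on each $V_i$ still need to be coloured. The crucial step --- the one that forces the count to come out to $n$ rather than $n + 2e - 1$ --- is to merge the internal blocks of $V_r$ into the $\mathcal{C}_r^k$ classes, which are precisely those that avoid $V_r$. Because any two blocks of an $S_e(2e)$ must share a vertex (since $2(e+1) > 2e$), the $2e-1$ internal blocks of $V_r$ must all receive distinct colours; but this is fine, as I have $2e$ candidate classes and can simply append $B_r^k$ to $\mathcal{C}_r^k$ for $k = 1, \ldots, 2e-1$, leaving $\mathcal{C}_r^{2e}$ unchanged. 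Since $B_r^k \subset V_r$ while the existing members of $\mathcal{C}_r^k$ lie entirely outside $V_r$, disjointness within each class is preserved; and because each $V_i$ is missed by exactly one near-$1$-factor, the merging sends internal blocks of different $V_i$'s into disjoint groups of classes, so no conflict can arise between them either. I do not foresee a serious obstacle beyond some careful bookkeeping: one should write down the near-$1$-factorization explicitly (for instance, the standard one on $\mathbb{Z}_{2t+1}$) and verify that the resulting $n$ colour classes partition $\mathcal{B}$, giving an $n$-block-colourable $S_e(n)$ as required.
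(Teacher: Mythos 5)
Your proposal is correct and follows essentially the same route as the paper's own proof: a near-$1$-factorization of $K_{2t+1}$, cross-edge colour classes $\mathcal C_r^1,\ldots,\mathcal C_r^{2e}$ built as in Theorem~\ref{cong0_2e_even}, and the internal blocks of $V_r$ absorbed into the classes of the near-$1$-factor $F_r$ that misses $V_r$, giving $(2t+1)\cdot 2e = n$ colours in total. The key observation you isolate --- that every block in $\mathcal C_r^k$ avoids $V_r$, so $B_r^k$ can be appended to $\mathcal C_r^k$ without violating disjointness --- is exactly the paper's argument.
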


\begin{proof}
Since the case $n=2e$ is trivial, we will suppose that $n=4et+2e$ where $t\geq 1$. Let $V=\{v_1^1,\ldots,v_{2e}^1,v_1^2,\ldots,v_{2e}^2,\ldots,v_1^{2t+1},\ldots,v_{2e}^{2t+1}\}$ be the set of points. Partition $V$ into $2t+1$ subsets $V_1=\{v_1^1,\ldots,v_{2e}^1\},\ldots,V_{2t+1}=\{v_1^{2t+1},\ldots,v_{2e}^{2t+1}\}$ of size $2e$. For each $i$ where $1\leq i\leq 2t+1$, take an $e$-star system $(V_i,\mathcal B_i)$ of order $2e$, with blocks $\mathcal B_i=\{B_i^1,\ldots,B_i^{2e-1}\}$, which is necessarily $(2e-1)$-block-chromatic. 

Next, obtain a complete graph $K_{2t+1}$ whose vertices are $V_1,\ldots,V_{2t+1}$. Since $2t+1$ is odd, $K_{2t+1}$ admits a near 1-factorization; hence we can partition the set of all pairs of subsets chosen from $V_1,\ldots,V_{2t+1}$ into $2t+1$ near $1$-factors $F_1,\ldots,F_{2t+1}$. For each $i$ where $1\leq i\leq 2t+1$, we let $\mathcal C_i^1,\ldots,\mathcal C_i^{2e}$ be colour classes defined in a similar manner as those in the proof of Theorem~\ref{cong0_2e_even}. Also, for each $i$, we suppose that $V_i$ is the missing point of the near 1-factor $F_i$. 

To colour the $e$-stars in each $\mathcal B_i$, we can reuse colours associated with the near $1$-factor $F_i$. So we let $(\mathcal C_i^1)'=\mathcal C_i^1 \cup \{B_i^1\},\ldots, (\mathcal C_i^{2e-1})'=\mathcal C_i^{2e-1} \cup \{B_i^{2e-1}\}$.

Then $(V,\mathcal B)$, where $\mathcal B=\big(\bigcup\limits_{i=1}^{2t+1} \mathcal B_i\big) \cup \big(\bigcup_{i=1}^{2t+1} \mathcal C_i^1\big) \cup 
\ldots \cup \big(\bigcup_{i=1}^{2t+1} \mathcal C_i^{2e}\big)$ is a $(4et+2e)$-block-colourable $e$-star system of order $4et+2e$ with colour classes 
$(\mathcal C_1^1)', \ldots, (\mathcal C_1^{2e-1})', \mathcal C_1^{2e}, \ldots,\linebreak (\mathcal C_{2t+1}^1)',\ldots, (\mathcal C_{2t+1}^{2e-1})', \mathcal C_{2t+1}^{2e}$.
\end{proof}

The final result of this section considers the case where $n\equiv 2e+1$ (mod $4e$).  Similar to Theorem~\ref{cong1_even}, we will extend an $e$-star system of order $n-1\equiv 2e$ (mod $4e$) by introducing a new point, although the details are not identical.

\begin{thm} \label{cong1_odd}
For $n\equiv 2e+1$ (mod $4e$), there exists an $(n-1)$-block-colourable $e$-star system of order $n$, except for $n=2e+1$ where all $e$-star systems of order $2e+1$ are trivially $n$-block-chromatic.
\end{thm}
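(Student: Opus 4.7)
The plan is to mirror Theorem~\ref{cong1_even}: start from the $(n-1)$-block-colourable $e$-star system of order $n-1\equiv 2e\pmod{4e}$ built in Theorem~\ref{cong0_odd}, adjoin a new point $x$, and absorb the new $e$-stars incident with $x$ into the existing colour classes. The key difference from Theorem~\ref{cong1_even} is tightness: the initial system already uses exactly $n-1$ colours, so no new colour may be introduced; every new $e$-star must land in an existing class without vertex conflict, and since all new stars share the vertex $x$, each existing class may receive at most one of them.

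Write $n=4et+2e+1$ with $t\geq 1$ (the case $n=2e+1$ is the stated exception). Let $V=V'\cup\{x\}$ with $V'=V_1\cup\cdots\cup V_{2t+1}$, and take $(V',\mathcal{B}')$ from Theorem~\ref{cong0_odd} with its $n-1$ colour classes $(\mathcal{C}_i^1)',\ldots,(\mathcal{C}_i^{2e-1})',\mathcal{C}_i^{2e}$ for $1\leq i\leq 2t+1$. For each $i$ I decompose the $2e$ edges between $x$ and $V_i$ into the two $e$-stars $S_i^1=\{x;v_1^i,\ldots,v_e^i\}$ and $S_i^2=\{x;v_{e+1}^i,\ldots,v_{2e}^i\}$. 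Two observations drive the assignment: (a) since $V_i$ is the subset missing from $F_i$, the class $\mathcal{C}_i^{2e}$ touches no vertex of $V_i$ at all, so $S_i^1$ fits into $\mathcal{C}_i^{2e}$ with no conflict, and doing this for each $i$ consumes $2t+1$ distinct classes; (b) whenever $V_i$ is the first element of its pair in some $F_j$ with $j\neq i$, inspection of the $\mathcal{F}$-construction from Theorem~\ref{cong0_2e_even} shows that $\mathcal{C}_j^1$ uses only the two vertices $v_1^i,v_2^i$ from $V_i$, while $B_j^1\subseteq V_j$ avoids $V_i$ entirely, so $(\mathcal{C}_j^1)'$ accepts $S_i^2$ (as $\{v_{e+1}^i,\ldots,v_{2e}^i\}\cap\{v_1^i,v_2^i\}=\emptyset$ for $e\geq 2$).

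The main obstacle is to fix an orientation of the pairs in the near 1-factorization and a mapping $i\mapsto j(i)$ so that each $V_i$ is the first element of its pair in $F_{j(i)}$ and the classes $(\mathcal{C}_{j(i)}^1)'$ are pairwise distinct. I would handle this with a cyclic near 1-factorization: index the subsets by $\mathbb{Z}_{2t+1}$ and take $F_j=\{(V_{j+m},V_{j-m}):1\leq m\leq t\}$ with $V_{j+m}$ listed first, so that (via $m=1$) $V_i$ is the first element of its pair with $V_{i-2}$ in $F_{i-1}$. Assigning $S_i^2$ to $(\mathcal{C}_{i-1}^1)'$ then uses $2t+1$ further distinct classes, and the only remaining thing to verify is that the other pairs of $F_{i-1}$ (those with $m\geq 2$) contribute no vertex of $V_i$ to $\mathcal{C}_{i-1}^1$, which is immediate from $(i-1)\pm m\not\equiv i\pmod{2t+1}$ for $2\leq m\leq t$. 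Combining the two assignments gives the required $(n-1)$-block-colouring.
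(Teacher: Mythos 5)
Your proposal is correct and follows essentially the same route as the paper: extend the order-$(n-1)$ system of Theorem~\ref{cong0_odd} by a new point $x$ and absorb the two new $e$-stars at each $V_i$ into existing colour classes, using a shift in the near $1$-factorization so that no class receives two of them. The only (harmless) divergence is that you place $\{x;v_1^i,\ldots,v_e^i\}$ into $\mathcal{C}_i^{2e}$ via the missing-point property of $F_i$, whereas the paper places it into $\mathcal{C}_{i-1}^{2e}$ using the fact that $V_i$ is listed first in its pair in $F_{i-1}$; your explicit cyclic orientation of the near $1$-factors also makes precise an ordering assumption that the paper leaves implicit.
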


\begin{proof}
Since the case $n=2e+1$ is trivial, we can suppose that $n=4et+2e+1$ where $t\geq 1$. 

Let $V=V'\cup \{x\}$, where $V'=\{v_1^1,\ldots,v_{2e}^1,v_1^2,\ldots,v_{2e}^2,\ldots,v_1^{2t+1},\ldots,v_{2e}^{2t+1}\}$. Partition $V'$ into $2t+1$ subsets $V_1=\{v_1^1,\ldots,v_{2e}^1\},$ $V_2=\{v_1^2,\ldots,v_{2e}^2\},$ $\ldots,$ $V_{2t+1}=\{v_1^{2t+1},\ldots,v_{2e}^{2t+1}\}$ of size $2e$.
Let $(V',\mathcal B')$ be the $(4et+2e)$-block-colourable $e$-star system of order $4et+2e$ with colour classes $(\mathcal C_1^1)', \ldots, (\mathcal C_1^{2e-1})',\mathcal C_1^{2e}, \ldots, (\mathcal C_{2t+1}^1)', \ldots, (\mathcal C_{2t+1}^{2e-1})', \mathcal C_{2t+1}^{2e}$ constructed in the proof of Theorem~\ref{cong0_odd}. 

The next step is to decompose the edges between the point $x$ and the subsets $V_1,\ldots,V_{2t+1}$ into $e$-stars and colour these $e$-stars. For each $i$ where $1\leq i\leq 2t+1$, we decompose the edges between the point $x$ and the subset $V_i$ into $e$-stars $\{x;v_1^i,\ldots,v_e^i\}$ and $\{x;v_{e+1}^i,\ldots,v_{2e}^i\}$. To colour these, we will reuse the existing colours: these will be taken from the colour classes associated with the $1$-factor $F_{i-1}$ (where $F_0=F_{2t+1}$).  
In the $1$-factor $F_{i-1}$, suppose that $V_i$ is the first element in the ordered pair $(V_i,V_j)\in F_{i-1}$; we can see from the constructions in the proofs of Theorem~\ref{cong0_2e_even} and Theorem~\ref{cong0_odd} that the $e$-stars $\{x;v_1^i,\ldots,v_e^i\}$ and $\{x;v_{e+1}^i,\ldots,v_{2e}^i\}$ do not have any intersection with the points in the colour classes $C_{i-1}^{2e}$ and $(C_{i-1}^1)'$ respectively. Therefore, for each $i$ where $1\leq i\leq 2t+1$, we let $(\mathcal C_{i-1}^1)''= (\mathcal C_{i-1}^1)' \cup \big\{\{x;v_{e+1}^{i},\ldots,v_{2e}^{i}\}\big\}$ and $(\mathcal C_{i-1}^{2e})''= \mathcal C_{i-1}^{2e} \cup \big\{\{x;v_1^{i},\ldots,v_e^{i}\}\big\}$. 

Then $(V,\mathcal B)$, where $\mathcal B=\mathcal B' \cup \big(\bigcup\limits_{i=1}^{2t+1} \big\{\{x;v_1^i,\ldots,v_e^i\}, \{x;v_{e+1}^i,\ldots,v_{2e}^i\}\big\}\big)$, is a $(4et+2e)$-block-colourable $e$-star system of order $4et+2e+1$ with colour classes $(\mathcal C_1^1)'', (\mathcal C_1^2)', \ldots, (\mathcal C_1^{2e-1})', \linebreak (\mathcal C_1^{2e})'', \ldots, (\mathcal C_{2t+1}^1)'', (\mathcal C_{2t+1}^2)', \ldots, (\mathcal C_{2t+1}^{2e-1})', (\mathcal C_{2t+1}^{2e})''$.
\end{proof}

Combining all of the results of this section, we have the following corollary.

\begin{cor} \label{combined_e}
For all $e\geq 3$, and each $n\equiv 0,1$ (mod $2e$), there exists either an $(n-1)$-block-colourable or an $n$-block-colourable $e$-star system of order $n$.
\end{cor}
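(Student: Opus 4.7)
The plan is to deduce this corollary directly by case analysis on $n$ modulo $4e$, invoking the four construction theorems proved earlier in the section. The key observation is that the residues $0$ and $1$ modulo $2e$ split into exactly four residues modulo $4e$, namely
\[ n \equiv 0,\ 2e,\ 1,\ 2e+1 \pmod{4e}, \]
and each of these cases is handled by one of Theorems~\ref{cong0_2e_even}, \ref{cong0_odd}, \ref{cong1_even}, and \ref{cong1_odd} respectively.

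Concretely, I would first note the partition of residue classes: every $n\equiv 0 \pmod{2e}$ satisfies either $n\equiv 0$ or $n\equiv 2e \pmod{4e}$, and every $n\equiv 1 \pmod{2e}$ satisfies either $n\equiv 1$ or $n\equiv 2e+1 \pmod{4e}$. Then I would step through the four cases: for $n\equiv 0 \pmod{4e}$, Theorem~\ref{cong0_2e_even} yields an $(n-1)$-block-colourable $S_e(n)$; for $n\equiv 2e \pmod{4e}$, Theorem~\ref{cong0_odd} yields an $n$-block-colourable $S_e(n)$ (trivially so in the degenerate small case $n=2e$, where the system has exactly $2e-1 = n-1$ pairwise intersecting blocks); for $n\equiv 1 \pmod{4e}$, Theorem~\ref{cong1_even} yields an $n$-block-colourable $S_e(n)$; and for $n\equiv 2e+1 \pmod{4e}$, Theorem~\ref{cong1_odd} yields an $(n-1)$-block-colourable $S_e(n)$, with the small case $n=2e+1$ again dispatched by the triviality noted in that theorem.

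Since each of the four theorems produces either an $n$- or $(n-1)$-block-colourable $e$-star system of order $n$, the union of the four cases gives the claim for every admissible $n$ with $n\equiv 0,1 \pmod{2e}$. There is no real obstacle here: the corollary is purely a bookkeeping exercise bundling the four construction theorems into one statement, so the proof amounts to little more than stating the case split and citing the relevant result for each case.
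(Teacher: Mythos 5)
Your proposal is correct and is exactly the argument the paper intends: the corollary is obtained by splitting $n\equiv 0,1\pmod{2e}$ into the four residues $0$, $2e$, $1$, $2e+1$ modulo $4e$ and citing Theorems~\ref{cong0_2e_even}, \ref{cong0_odd}, \ref{cong1_even} and \ref{cong1_odd} respectively, with the degenerate cases $n=2e$ and $n=2e+1$ still falling under the ``$(n-1)$- or $n$-block-colourable'' umbrella. The paper gives no further detail beyond ``combining all of the results of this section,'' so your write-up matches its proof.
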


\section{An upper bound: the case of $3$-star systems} \label{sec:bound-3}

In general, the results of Section~\ref{sec:bound-e} do not cover all the possible congruence classes mod~$2e$ for which $e$-star systems can exist.  In particular, if $e=3$, they only cover the cases where $n\equiv 0,1$ mod~$6$; it is known that $3$-star systems will also exist when $n\equiv 3,4$ mod~$6$.  In this section, we will give constructions of block-coloured $3$-star systems for these additional congruence classes (considering them modulo $12$), which attain the same upper bounds on the minimum number of colours.  As an additional ingredient, we will make use of the $8$-block-colourable $S_3(9)$ given in Example~\ref{3star9-colour}.

\begin{thm}\label{cong3_even}
For $n\equiv 3$ (mod $12$), there exists an $(n-1)$-block-colourable $3$-star system of order $n$.
\end{thm}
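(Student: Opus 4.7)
The plan is to adapt Theorem~\ref{cong0_odd} by introducing a ``super-group'' $W$ of size $9$ on which we place the $8$-block-colourable $S_3(9)$ of Example~\ref{3star9-colour}.  Write $n = 12t + 3 = 9 + 6(2t-1)$ and partition $V = W \cup V_1 \cup \cdots \cup V_{2t-1}$ with $|W| = 9$ and $|V_i| = 6$.  For $t \geq 2$, apply Theorem~\ref{cong0_odd} with parameter $t' = t-1$ to produce a $(12t-6)$-block-colourable $S_3(12t-6)$ on $V_1 \cup \cdots \cup V_{2t-1}$, with colour classes $(\mathcal C_j^k)'$ (for $1 \leq j \leq 2t-1$, $1 \leq k \leq 2e-1$) and $\mathcal C_j^{2e}$ in the notation of that proof.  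Combined with the $8$ colours $\mathcal D^1, \ldots, \mathcal D^8$ of the $S_3(9)$, we already have $8 + (12t-6) = 12t + 2 = n-1$ existing colour classes, and the goal is to decompose the cross edges of $K_{W, V_1 \cup \cdots \cup V_{2t-1}}$ into $3$-stars and absorb them into these classes without introducing any new colour.  (For $t = 1$, the inner system collapses to a single $5$-block-chromatic $S_3(6)$ on $V_1$; here one additional colour beyond the $8 + 5 = 13$ already present is needed to reach $n-1 = 14$.)

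For the cross edges, I would fix a partition $W = W^1 \sqcup W^2 \sqcup W^3$ into three triples and decompose each $K_{W, V_i}$ as the $18$ $V_i$-rooted $3$-stars $\{v; W^x\}$ for $v \in V_i$ and $x \in \{1,2,3\}$.  This gives $36t-18$ cross stars in total, which is exactly $3$ times the number $12t-6$ of inner colour classes.  The natural plan is to absorb exactly three cross stars into each inner colour, namely three $V$-rooted stars $\{v^{(1)}; W^1\}, \{v^{(2)}; W^2\}, \{v^{(3)}; W^3\}$ whose roots $v^{(1)}, v^{(2)}, v^{(3)}$ are distinct and all free in that colour.  These three stars are automatically pairwise vertex-disjoint (since $\{W^1, W^2, W^3\}$ partitions $W$) and disjoint from the existing contents of the colour (which touches no vertex of $W$).

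The main obstacle is to demonstrate the existence of such an assignment.  Unpacking Theorem~\ref{cong0_odd}, each $(\mathcal C_j^k)'$ has a specific set of $4t-2$ free vertices in $V_1 \cup \cdots \cup V_{2t-1}$, namely $2$ in $V_j$ (the missing group in the near $1$-factor $F_j$, minus the four vertices used by the absorbed block $B_j^k$), $4$ in each of the $t-1$ odd-position $V_i$'s of $F_j$, and $0$ in each even-position $V_i$.  The assignment is then three bijections $\phi_1, \phi_2, \phi_3$ from $V_1 \cup \cdots \cup V_{2t-1}$ to the set of inner colour classes, satisfying: (i) for each $v$ the three images $\phi_1(v), \phi_2(v), \phi_3(v)$ are distinct colours, each containing $v$ as a free vertex; and (ii) for each colour $C$ the three preimages $\phi_1^{-1}(C), \phi_2^{-1}(C), \phi_3^{-1}(C)$ are distinct vertices.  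Existence can be established by a Hall's marriage type argument, taking advantage of the abundance of free vertices ($4t-2$ per colour) and the symmetry of the near $1$-factorisation; an explicit construction, perhaps tied to a Latin-square-like structure on $V_1 \cup \cdots \cup V_{2t-1}$, would make the assignment transparent.

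For $t = 1$ ($n = 15$) the construction simplifies considerably: there is no inner cross structure, and only the single $K_{9,6}$ between $W$ and $V_1$ needs to be decomposed.  Decomposing $K_{9,6}$ into $18$ stars and assigning them, with the help of one additional colour, to the $13$ existing classes (the $\mathcal D^k$ of the $S_3(9)$ together with the $\mathcal E^k$ of the $S_3(6)$ on $V_1$, chosen so that the free-vertex profiles are compatible with our $K_{9,6}$-decomposition) yields the required $14$-block-colouring.
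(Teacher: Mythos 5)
Your overall architecture is genuinely different from the paper's: the paper keeps the size-$9$ part as one of the $2t$ vertices of the quotient graph $K_{2t}$ ($2t$ even), takes an ordinary $1$-factorization, and handles each pair involving the big part by splitting $K_{6,9}$ into six classes of \emph{three} stars each (instead of two), so that every $1$-factor still contributes exactly six colours and no absorption into pre-existing classes is needed. Your plan instead excises the size-$9$ part, invokes Theorem~\ref{cong0_odd} on the remaining $12t-6$ points, and tries to swallow all $36t-18$ cross stars into the $12t-6$ inner colour classes. The colour count $8+(12t-6)=n-1$ is right, the star decomposition of $K_{W,V'}$ via the partition $W=W^1\sqcup W^2\sqcup W^3$ is valid, and the reduction to three pairwise edge-disjoint perfect matchings in the ``free-vertex'' bipartite graph is correctly formulated. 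But that reduction is where the entire difficulty of the theorem now lives, and you have not discharged it: ``a Hall's marriage type argument'' is asserted, not given, and the free-vertex graph is irregular, so no off-the-shelf regularity/K\"onig decomposition applies.

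Worse, the claim can actually fail without an extra hypothesis you have not imposed. Consider the vertex $v_3^a$ of the $S_3(6)$ on $V_a$ (Example~\ref{3star6}): it lies in all five blocks $B_a^1,\dots,B_a^5$, so among the classes $(\mathcal C_a^k)'$ it is free only in $\mathcal C_a^6$; and for $j\neq a$ it is free in some classes $\mathcal C_j^k$ only when $V_a$ is the \emph{root} side of its pair in $F_j$ (if $V_a$ is the pendant side, all six of its vertices are consumed in every $\mathcal C_j^k$). Theorem~\ref{cong0_odd} places no constraint on these orientations, so in the worst case $v_3^a$ is free in exactly one colour class, while your scheme needs every vertex free in at least three. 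To repair this you must fix an orientation of the pairs (e.g.\ an Eulerian-type orientation making each $V_a$ the root side in $t-1$ of its $2t-2$ pairs), verify the degree lower bounds, and then actually prove the existence of three disjoint perfect matchings — or, more in the spirit of the paper, give an explicit assignment. The $t=1$ case ($n=15$) is likewise only sketched: distributing the $18$ stars of $K_{9,6}$ over the $13$ existing classes plus one new one is an exact-cover problem whose feasibility you assert but do not exhibit (note that four of the classes $\mathcal D^k$ have only one free vertex of $W$ and can each absorb at most one $W$-rooted cross star). As it stands the proof has a genuine gap at its central step.
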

\begin{proof}
Let $n=12t+3$. Since there is no $3$-star system of order $3$, we will assume that $t\geq 1$. Let $V$ be the set of points. We will partition $V$ into $2t$ subsets, $V_1=\{v_1^1,\ldots,v_6^1\}, \ldots, V_{2t-1}=\{v_1^{2t-1},\ldots,v_6^{2t-1}\}$, $V_{2t}=\{v_1^{2t},\ldots,v_9^{2t}\}$ (so that $V_{2t}$ has size $9$ and the others have size $6$).  We will place a $3$-star system on each of these: for $1\leq i\leq 2t-1$, we let $\mathcal B_i=\big\{\{v_1^i;v_3^i,v_5^i,v_6^i\}, \{v_2^i;v_1^i,v_3^i,v_6^i\}, \{v_4^i;v_1^i,v_2^i,v_3^i\}, \{v_5^i;v_2^i,v_3^i,v_4^i\}, \{v_6^i;v_3^i,v_4^i,v_5^i\}\big\}$, so that $(V_i,\mathcal B_i)$ is a copy of the $S_3(6)$ in Example~\ref{3star6}, which is necessarily $5$-block-chromatic.
On $V_{2t}$, we will place a copy of the $S_3(9)$ from Example~\ref{3star9-colour}, which is $8$-block-chromatic: we let 
$\mathcal B_{2t}=\big\{\{v_1^{2t};v_3^{2t},v_5^{2t},v_6^{2t}\}, 
\{v_2^{2t};v_1^{2t},v_3^{2t},v_6^{2t}\}, 
\{v_4^{2t};v_1^{2t},v_2^{2t},v_3^{2t}\}, 
\{v_5^{2t};v_2^{2t},v_3^{2t},v_4^{2t}\}, 
\{v_6^{2t};v_3^{2t},v_4^{2t},v_5^{2t}\}, \linebreak
\{v_7^{2t};v_1^{2t},v_2^{2t},v_3^{2t}\}, 
\{v_8^{2t},v_4^{2t},v_5^{2t},v_9^{2t}\}, 
\{v_7^{2t},v_4^{2t},v_5^{2t},v_8^{2t}\}, 
\{v_8^{2t};v_1^{2t},v_2^{2t},v_3^{2t}\}, 
\{v_6^{2t},v_7^{2t},v_8^{2t},v_9^{2t}\}, \linebreak
\{v_9^{2t},v_1^{2t},v_2^{2t},v_3^{2t}\},
\{v_9^{2t},v_4^{2t},v_5^{2t},v_7^{2t}\}\big\}$.  Note that this has a subsystem isomorphic to the $S_3(6)$ placed on each of $V_1,\ldots,V_{2t-1}$.

We now construct a complete graph $K_{2t}$ in which the vertices are $V_1,\ldots,V_{2t}$. Since $2t$ is even, $K_{2t}$ admits a 1-factorization, so we can partition the set of all pairs of subsets from $V_1,\ldots,V_{2t}$ into $2t-1$ $1$-factors $F_1,\ldots,F_{2t-1}$. Without loss of generality, we assume that $F_1=\big\{(V_1,V_2),(V_3,V_4),\ldots,(V_{2t-1},V_{2t})\big\}$. Now, for each $j$ where $1\leq j\leq t-1$, let 
\renewcommand{\arraystretch}{1.2}%
\[ \begin{array}{l}
\mathcal F_{2j-1}^1=\big\{ \{v_1^{2j-1};v_1^{2j},v_2^{2j},v_3^{2j}\}, \{v_2^{2j-1};v_4^{2j},v_5^{2j},v_6^{2j}\}\big\},\\ 
\mathcal F_{2j-1}^2=\big\{\{v_1^{2j-1};v_4^{2j},v_5^{2j},v_6^{2j}\}, \{v_2^{2j-1};v_1^{2j},v_2^{2j},v_3^{2j}\}\big\}, \\
\mathcal F_{2j-1}^3=\big\{\{v_3^{2j-1},v_1^{2j},v_2^{2j},v_3^{2j}\}, \{v_4^{2j-1},v_4^{2j},v_5^{2j},v_6^{2j}\}\big\}, \\
\mathcal F_{2j-1}^4=\big\{\{v_3^{2j-1};v_4^{2j},v_5^{2j},v_6^{2j}\}, \{v_4^{2j-1};v_1^{2i},v_2^{2j},v_3^{2j}\}\big\}, \\
\mathcal F_{2j-1}^5=\big\{\{v_5^{2j-1};v_1^{2j},v_2^{2j},v_3^{2j}\}, \{v_6^{2j-1};v_4^{2j},v_5^{2j},v_6^{2j}\}\big\}, \\
\mathcal F_{2j-1}^6=\big\{\{v_5^{2j-1};v_4^{2j},v_5^{2j},v_6^{2j}\}, \{v_6^{2j-1};v_1^{2j},v_2^{2j},v_3^{2j}\}\big\}.
\end{array} \]%
\renewcommand{\arraystretch}{1.0}%
Moreover, let
\renewcommand{\arraystretch}{1.2}%
\[ \begin{array}{l}
\mathcal F_{2t-1}^1=\big\{ \{v_1^{2t-1};v_1^{2t},v_2^{2t},v_3^{2t}\}, \{v_2^{2t-1};v_4^{2t},v_5^{2t},v_6^{2t}\}, \{v_3^{2t-1};v_7^{2t},v_8^{2t},v_9^{2t}\}\big\},\\
\mathcal F_{2t-1}^2=\big\{\{v_1^{2t-1};v_4^{2t},v_5^{2t},v_6^{2t}\}, \{v_2^{2t-1};v_7^{2t},v_8^{2t},v_9^{2t}\},\{v_3^{2t-1};v_1^{2t},v_2^{2t},v_3^{2t}\}\big\}, \\
\mathcal F_{2t-1}^3=\big\{\{v_1^{2t-1};v_7^{2t},v_8^{2t},v_9^{2t}\}, \{v_2^{2t-1},v_1^{2t},v_2^{2t},v_3^{2t}\}, \{v_3^{2t-1};v_4^{2t},v_5^{2t},v_6^{2t}\}\big\}, \\
\mathcal F_{2t-1}^4=\big\{ \{v_4^{2t-1};v_1^{2t},v_2^{2t},v_3^{2t}\}, \{v_5^{2t-1};v_4^{2t},v_5^{2t},v_6^{2t}\},\{v_6^{2t-1};v_7^{2t},v_8^{2t},v_9^{2t}\}\big\}, \\
\mathcal F_{2t-1}^5=\big\{\{v_4^{2t-1};v_4^{2t},v_5^{2t},v_6^{2t}\}, \{v_5^{2t-1};v_7^{2t},v_8^{2t},v_9^{2t}\},\{v_6^{2t-1};v_1^{2t},v_2^{2t},v_3^{2t}\}\big\}, \\
\mathcal F_{2t-1}^6=\big\{\{v_4^{2t-1};v_7^{2t},v_8^{2t},v_9^{2t}\}, \{v_5^{2t-1},v_1^{2t},v_2^{2t},v_3^{2t}\}, \{v_6^{2t-1};v_4^{2t},v_5^{2t},v_6^{2t}\}\big\}.
\end{array} \]%
\renewcommand{\arraystretch}{1.0}%
Then the edges between pairs of subsets in the 1-factor $F_1$ can be partitioned into six colour classes $\mathcal C_1^1=\bigcup\limits_{j=1}^{t} \mathcal F_{2j-1}^1, \ldots, \mathcal C_1^6=\bigcup\limits_{j=1}^{t} \mathcal F_{2j-1}^6$. For each $i$ where $2\leq i\leq 2t-1$, the edges between pairs of subsets in the $1$-factor $F_i$ can be partitioned into six colour classes $\mathcal C_i^1, \ldots, \mathcal C_i^6$ in a similar manner.  Together, these use a total of $6(2t-1)$ colours.

It now remains to colour the $3$-stars in each $\mathcal B_i$ (where $1\leq i\leq 2t$).  Since these are formed of disjoint $3$-star systems which are either $5$-block-chromatic (for $1\leq i\leq 2t-1$) or $8$-block-chromatic (for $i=2t$), we require eight colours.  Using the $8$-block colouring given in Example~\ref{3star9-colour}, we obtain the following colour classes:%
\renewcommand{\arraystretch}{1.2}%
\[ \begin{array}{l}
\mathcal D^1=\bigcup\limits_{i=1}^{2t}\big\{\{v_1^i;v_3^i,v_5^i,v_6^i\}\big\} \\
\mathcal D^2=\bigcup\limits_{i=1}^{2t}\big\{\{v_2^i;v_1^i,v_3^i,v_6^i\}\big\} \cup \big\{\{v_9^{2t};v_{4}^{2t},v_5^{2t},v_7^{2t}\}\big\} \\
\mathcal D^3=\bigcup\limits_{i=1}^{2t}\big\{\{v_4^i;v_1^i,v_2^i,v_3^i\}\big\} \cup \big\{\{v_6^{2t};v_7^{2t},v_8^{2t},v_9^{2t}\}\big\} \\
\mathcal D^4=\bigcup\limits_{i=1}^{2t}\big\{\{v_5^i;v_2^i,v_3^i,v_4^i\}\big\} 
\end{array}
\quad
\begin{array}{l}
\mathcal D^5=\bigcup\limits_{i=1}^{2t}\big\{\{v_6^i;v_3^i,v_4^i,v_5^i\}\big\} \\
\mathcal D^6=\big\{\{v_7^{2t};v_1^{2t},v_2^{2t},v_3^{2t}\}, \{v_8^{2t},v_4^{2t},v_5^{2t},v_9^{2t}\}\big\} \\
\mathcal D^7=\big\{\{v_7^{2t};v_4^{2t},v_5^{2t},v_8^{2t}\}, \{v_9^{2t};v_1^{2t},v_2^{2t},v_3^{2t}\}\big\} \\
\mathcal D^8=\big\{\{v_8^{2t};v_1^{2t},v_2^{2t},v_3^{2t}\}\big\}
\end{array} \]%
\renewcommand{\arraystretch}{1.0}%
Together, these $6(2t-1)+8 = 12t+2 =n-1$ colour classes yield a $(12t+2)$-block-colourable $3$-star system of order $12t+3$, with colour classes $\mathcal C_1^1, \ldots, \mathcal C_1^6, \ldots, \mathcal C_{2t-1}^1, \ldots, \mathcal C_{2t-1}^6, \mathcal D^1, \ldots, \mathcal D^8$.
\end{proof}

Similar to Theorem~\ref{cong1_even}, we may consider the case where $n\equiv 4$ (mod $12$) by taking our previous construction and extend it by adding a new point.

\begin{thm} \label{cong4_even}
For $n\equiv 4$ (mod $12$), there exists an $n$-block-colourable $3$-star system of order $n$. 
\end{thm}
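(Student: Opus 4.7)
The plan is to mimic Theorem~\ref{cong1_even}, but applied to the $(n-2)$-block-colourable $3$-star system of order $n-1\equiv 3\pmod{12}$ produced by Theorem~\ref{cong3_even} rather than to Theorem~\ref{cong0_2e_even}. Write $n=12t+4$ and $V=V'\cup\{x\}$, where $V'$ carries the $S_3(12t+3)$ from Theorem~\ref{cong3_even}, with its partition $V_1\cup\cdots\cup V_{2t-1}\cup V_{2t}$ ($|V_i|=6$ for $i<2t$, $|V_{2t}|=9$) and the $(12t+2)$ colour classes $\mathcal{D}^1,\ldots,\mathcal{D}^8$ and $\mathcal{C}_i^1,\ldots,\mathcal{C}_i^6$ ($1\le i\le 2t-1$). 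Decompose the edges from $x$ to each $V_i$ into $3$-stars rooted at $x$: two stars $\{x;v_1^i,v_2^i,v_3^i\}$, $\{x;v_4^i,v_5^i,v_6^i\}$ when $i<2t$, and three stars $\{x;v_1^{2t},v_2^{2t},v_3^{2t}\}$, $\{x;v_4^{2t},v_5^{2t},v_6^{2t}\}$, $\{x;v_7^{2t},v_8^{2t},v_9^{2t}\}$ for $V_{2t}$; this produces $4t+1$ new blocks. Since the target is $n=(12t+2)+2$ colour classes, at most two \emph{new} classes are allowed, so $4t-1$ of these blocks must be absorbed into existing classes.

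For each $i$ with $1\le i\le 2t-1$, I would reuse colours from the $1$-factor $F_i$ exactly as in Theorem~\ref{cong1_even}: orienting the pair of $F_i$ containing $V_i$ so that $V_i$ is first, the explicit formulas in the proof of Theorem~\ref{cong3_even} ensure that $v_4^i,v_5^i,v_6^i$ are unused in $\mathcal{C}_i^1$ and $v_1^i,v_2^i,v_3^i$ are unused in $\mathcal{C}_i^6$ (regardless of whether the partner of $V_i$ in $F_i$ is a $6$-element or the $9$-element subset). We may therefore append $\{x;v_4^i,v_5^i,v_6^i\}$ to $\mathcal{C}_i^1$ and $\{x;v_1^i,v_2^i,v_3^i\}$ to $\mathcal{C}_i^6$; since distinct $V_i$ use distinct $F_i$, these extensions are pairwise compatible and absorb $4t-2$ of the new blocks.

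The main obstacle is handling the three new stars at $V_{2t}$. Every class $\mathcal{C}_j^k$ involving $V_{2t}$ uses all nine vertices of $V_{2t}$ as pendants, so none of them can absorb a new star rooted at $x$ with pendants in $V_{2t}$. Inspecting the eight $\mathcal{D}^k$ (coming from the $8$-block-colouring of the $S_3(9)$ in Example~\ref{3star9-colour}), one finds that $\{x;v_7^{2t},v_8^{2t},v_9^{2t}\}$ may be appended to $\mathcal{D}^1$ since $\mathcal{D}^1$ uses only $v_1^{2t},v_3^{2t},v_5^{2t},v_6^{2t}$ at $V_{2t}$, while no existing class leaves $\{v_1^{2t},v_2^{2t},v_3^{2t}\}$ or $\{v_4^{2t},v_5^{2t},v_6^{2t}\}$ simultaneously free. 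I would therefore set $(\mathcal{D}^1)'=\mathcal{D}^1\cup\big\{\{x;v_7^{2t},v_8^{2t},v_9^{2t}\}\big\}$ and introduce two new colour classes $\mathcal{A}_1=\big\{\{x;v_1^{2t},v_2^{2t},v_3^{2t}\}\big\}$ and $\mathcal{A}_2=\big\{\{x;v_4^{2t},v_5^{2t},v_6^{2t}\}\big\}$. The resulting collection of $(12t+2)+2=n$ pairwise-valid colour classes provides the required $n$-block-colourable $S_3(n)$.
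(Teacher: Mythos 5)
Your proposal is correct and follows essentially the same route as the paper's proof: extend the $S_3(12t+3)$ of Theorem~\ref{cong3_even} by a new point $x$, absorb $\{x;v_4^i,v_5^i,v_6^i\}$ into $\mathcal C_i^1$ and $\{x;v_1^i,v_2^i,v_3^i\}$ into $\mathcal C_i^6$ for $1\le i\le 2t-1$, append $\{x;v_7^{2t},v_8^{2t},v_9^{2t}\}$ to $\mathcal D^1$, and give the remaining two stars at $V_{2t}$ their own new colours, for a total of $n$ classes. (Your side remark that no existing class leaves $\{v_4^{2t},v_5^{2t},v_6^{2t}\}$ free is not quite accurate --- $\mathcal D^8$ uses only $v_8^{2t},v_1^{2t},v_2^{2t},v_3^{2t}$ --- but this aside is immaterial to the validity of the $n$-block-colouring you construct.)
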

\begin{proof}
Let $n=12t+4$. Since there is no $3$-star system of order $4$, we will assume that $t\geq 1$. Here, we will extend the block colouring of a $3$-star system of order $n-1\equiv 3$ (mod $12$) as obtained in the proof of Theorem~\ref{cong3_even}.

Let $V=V' \cup \{x\}$ be the set of points where $|V'|=12t+3$. Partition $V'$ into $2t-1$ subsets of size~$6$, namely $V_1=\{v_1^1,\ldots,v_6^1\}, \ldots, V_{2t-1}=\{v_1^{2t-1},\ldots,v_6^{2t-1}\}$, and one of size~$9$, namely $V_{2t}=\{v_1^{2t},\ldots,v_9^{2t}\}$. Let $\mathcal (V',\mathcal B')$ be a $(12t+2)$-block-colourable 3-star system of order $12t+3$ as in the proof of Theorem~\ref{cong3_even}, with colour classes $\mathcal C_1^1, \ldots, \mathcal C_1^6, \ldots, \mathcal C_{2t-1}^1, \ldots, \mathcal C_{2t-1}^6, \mathcal D^1, \ldots, \mathcal D^8$.

The next step is to decompose the edges between the point $x$ and the subsets $V_1,\ldots,V_{2t}$ into $3$-stars and then to colour them. First, for each $i$ where $1\leq i\leq 2t-1$, we decompose the edges between the point $x$ and the subset $V_i$ into two $3$-stars $\{x;v_1^i,v_2^i,v_3^i\}$ and $\{x;v_4^i,v_5^i,v_6^i\}$. To colour these, we reuse the colours of $\mathcal C_i^6$ and $\mathcal C_i^1$ associated to the $1$-factor $F_{i}$ in the proof of Theorem~\ref{cong3_even}. So, for each $i$ where $1\leq i\leq 2t-1$, we let $(\mathcal C_i^1)'= \mathcal C_i^1 \cup \big\{\{x;v_4^i,v_5^i,v_6^i\}\big\}$ and $(\mathcal C_i^6)'= \mathcal C_i^6 \cup \big\{\{x;v_1^i,v_2^i,v_3^i\}\big\}$.

It only remains to decompose the edges between the point $x$ and the subset $V_{2t}$ into $3$-stars and colour them. This time, we obtain three $3$-stars 
$\{x;v_1^{2t},v_2^{2t},v_3^{2t}\}$, $\{x;v_4^{2t},v_5^{2t},v_6^{2t}\}$, and $\{x;v_7^{2t},v_8^{2t},v_9^{2t}\}$. To colour these, we define two new colour classes, namely $\mathcal E^1 =\big\{\{x;v_1^{2t},v_2^{2t},v_3^{2t}\}\big\}$ and $\mathcal E^2 =\big\{\{x;v_4^{2t},v_5^{2t},v_6^{2t}\}\big\}$, while the third block can be assigned the same colour as class $\mathcal D^1$.  So we let $(\mathcal D^1)' = \mathcal D^1 \cup \big\{\{x;v_7^{2t},v_8^{2t},v_9^{2t}\}\big\}$.

Pulling all of this together, we obtain a $(12t+4)$-block-colourable 3-star system of order $12t+4$, with colour classes $(\mathcal C_1^1)', \mathcal C_1^2, \ldots, \mathcal C_1^5$, $(\mathcal C_1^6)', \ldots, (\mathcal C_{2t-1}^1)', \mathcal C_{2t-1}^2, \ldots, \mathcal C_{2t-1}^5, (\mathcal C_{2t-1}^6)', (\mathcal D^1)', \linebreak \mathcal D^2, \ldots,\mathcal D^8, \mathcal E^1, \mathcal E^2$.
\end{proof}

Our next result uses a combination of the approaches of Theorem~\ref{cong1_odd} and Theorem~\ref{cong3_even}, to consider the case where $n\equiv 9$ (mod $12$), using a near $1$-factorization.

\begin{thm}\label{cong3_odd}
For $n\equiv 9$ (mod $12$), there exists an $(n-1)$-block-colourable $3$-star system of order $n$.
\end{thm}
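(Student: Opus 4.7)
The plan is to adapt the near $1$-factorization construction from Theorem~\ref{cong0_odd} to accommodate a single copy of the $8$-block-chromatic $S_3(9)$ of Example~\ref{3star9-colour}, in the same spirit that Theorem~\ref{cong3_even} adapts Theorem~\ref{cong0_2e_even}. Writing $n=12t+9$, the case $t=0$ is handled directly by Example~\ref{3star9-colour}, so I would assume $t\geq 1$. Partition the $n$ points into $2t$ subsets $V_1,\ldots,V_{2t}$ of size $6$ and one subset $V_{2t+1}$ of size $9$. On each size-$6$ subset place a copy of the $S_3(6)$ of Example~\ref{3star6} (which is $5$-block-chromatic, with blocks $\mathcal{B}_i=\{B_i^1,\ldots,B_i^5\}$), and on $V_{2t+1}$ place a copy of the $S_3(9)$ of Example~\ref{3star9-colour}, whose blocks come partitioned into eight internal colour classes $\mathcal{D}^1,\ldots,\mathcal{D}^8$.

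Next, form the complete graph $K_{2t+1}$ on the subsets and take a near $1$-factorization $F_1,\ldots,F_{2t+1}$, indexed so that $F_i$ is the unique near $1$-factor missing $V_i$. When $i\neq 2t+1$, the matching $F_i$ consists of $t$ pairs, exactly one of which involves the $9$-subset $V_{2t+1}$; the rest are $6$-$6$ pairings. For each $6$-$6$ pair I would decompose the $36$ connecting edges into six colour classes of two $3$-stars each, using the scheme from the proof of Theorem~\ref{cong0_2e_even}; for the single $6$-$9$ pair (when present) I would decompose the $54$ connecting edges into six colour classes of three $3$-stars each, using the scheme from the proof of Theorem~\ref{cong3_even}. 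Merging the contributions across the pairs of $F_i$ yields six colour classes $\mathcal{C}_i^1,\ldots,\mathcal{C}_i^6$, and in total this step produces $6(2t+1)=12t+6$ colour classes covering every edge between distinct subsets.

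The final step is to absorb the internal blocks into existing colour classes whenever vertex-disjointness allows. Because $F_i$ misses $V_i$, every star in $\mathcal{C}_i^1,\ldots,\mathcal{C}_i^6$ is vertex-disjoint from $V_i$, so for each $i\leq 2t$ one can extend $\mathcal{C}_i^k$ to $\mathcal{C}_i^k\cup\{B_i^k\}$ for $k=1,\ldots,5$, absorbing all five internal $S_3(6)$ blocks of $V_i$ without introducing new colours. Similarly $F_{2t+1}$ misses $V_{2t+1}$, so one can extend $\mathcal{C}_{2t+1}^k$ by adjoining the internal colour class $\mathcal{D}^k$ for $k=1,\ldots,6$; the two leftover internal classes $\mathcal{D}^7$ and $\mathcal{D}^8$ are then introduced as two fresh colour classes. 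The final count is $(12t+6)+2=12t+8=n-1$, as required.

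The main obstacle I expect is the book-keeping at the end: the $S_3(9)$ on $V_{2t+1}$ contributes $8$ internal colour classes, while the near $1$-factor $F_{2t+1}$ provides only $6$ reusable colours, leaving exactly $2$ unabsorbed classes, which is precisely the slack between the edge-colour total $12t+6$ and the target $n-1=12t+8$. All the vertex-disjointness verifications are routine analogues of those in Theorems~\ref{cong0_odd} and~\ref{cong3_even}, so the numerical coincidence between the surplus internal classes and the available colour budget is really the heart of the argument.
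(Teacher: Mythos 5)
Your proposal is correct and follows essentially the same route as the paper's proof: the same partition into $2t$ size-$6$ parts carrying copies of the $S_3(6)$ plus one size-$9$ part carrying the $8$-block-chromatic $S_3(9)$, the same near $1$-factorization of $K_{2t+1}$ with $6$-$6$ pairs handled as in Theorem~\ref{cong0_2e_even} and the $6$-$9$ pair handled as in Theorem~\ref{cong3_even}, and the same absorption of each part's internal blocks into the six colour classes of the near $1$-factor missing that part, with exactly two new classes for the leftover pieces of the $S_3(9)$. The count $6(2t+1)+2=12t+8=n-1$ matches the paper exactly.
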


\begin{proof}
Let $n=12t+9$.  If $t=0$, we have an $8$-block-colourable $S_3(9)$ given in Example~\ref{3star9-colour}. In what follows, we will assume that $t\geq 1$.

Let $V$ be the set of points. Partition $V$ into $2t$ subsets of size $6$, namely $V_1=\{v_1^1,\ldots,v_6^1\}, \linebreak \ldots, V_{2t}=\{v_1^{2t},\ldots,v_6^{2t}\}$, and one subset of size $9$, namely $V_{2t+1}=\{v_1^{2t+1},\ldots,v_9^{2t+1}\}$. On each $V_i$ for $1\leq i\leq 2t$, we will place an $S_3(6)$ (from Example~\ref{3star6}, which is $5$-block-chromatic) with blocks $\mathcal B_i$ as in the proof of Theorem~\ref{cong3_even}. On $V_{2t+1}$ we place a copy of the $8$-block chromatic $S_3(9)$ from Example~\ref{3star9-colour}; we label its blocks $\mathcal B_{2t+1}$ as in the proof of Theorem~\ref{cong3_even}, but with the superscript $2t+1$.

Once again, we construct a complete graph $K_{2t+1}$ in which the vertices are $V_1,\ldots,V_{2t+1}$. Since $2t+1$ is odd, this $K_{2t+1}$ admits a near 1-factorization, and hence we can partition the set of all pairs of the subsets $V_1,\ldots,V_{2t+1}$ into $2t+1$ near $1$-factors, $F_1,\ldots,F_{2t+1}$. Suppose that $V_1,\ldots,V_{2t+1}$ are the missing points of the $1$-factors $F_1,\ldots,F_{2t+1}$ respectively. Assume that $F_1=\big\{(V_2,V_3),(V_4,V_5),\ldots,(V_{2t},V_{2t+1})\big\}$. For each $j$ where $1\leq j\leq t-1$, let 
\renewcommand{\arraystretch}{1.2}
\[ \begin{array}{l}
\mathcal F_{2j}^1=\big\{ \{v_1^{2j};v_1^{2j+1},v_2^{2j+1},v_3^{2j+1}\}, \{v_2^{2j};v_4^{2j+1},v_5^{2j+1},v_6^{2j+1}\}\big\},\\ 
\mathcal F_{2j}^2=\big\{\{v_1^{2j};v_4^{2j+1},v_5^{2j+1},v_6^{2j+1}\}, \{v_2^{2j};v_1^{2j+1},v_2^{2j+1},v_3^{2j+1}\}\big\}, \\
\mathcal F_{2j}^3=\big\{\{v_3^{2j},v_1^{2j+1},v_2^{2j+1},v_3^{2j+1}\}, \{v_4^{2j},v_4^{2j+1},v_5^{2j+1},v_6^{2j+1}\}\big\},\\
\mathcal F_{2j}^4=\big\{\{v_3^{2j};v_4^{2j+1},v_5^{2j+1},v_6^{2j+1}\}, \{v_4^{2j};v_1^{2j+1},v_2^{2j+1},v_3^{2j+1}\}\big\},\\
\mathcal F_{2j}^5=\big\{\{v_5^{2j};v_1^{2j+1},v_2^{2j+1},v_3^{2j+1}\}, \{v_6^{2j};v_4^{2j+1},v_5^{2j+1},v_6^{2j+1}\}\big\},\\
\mathcal F_{2j}^6=\big\{\{v_5^{2j};v_4^{2j+1},v_5^{2j+1},v_6^{2j+1}\}, \{v_6^{2j};v_1^{2j+1},v_2^{2j+1},v_3^{2j+1}\}\big\}. 
\end{array} \]
\renewcommand{\arraystretch}{1.0}
Also, let
\renewcommand{\arraystretch}{1.2}
\[ \begin{array}{l}
\mathcal F_{2t}^1=\big\{ \{v_1^{2t};v_1^{2t+1},v_2^{2t+1},v_3^{2t+1}\}, \{v_2^{2t};v_4^{2t+1},v_5^{2t+1},v_6^{2t+1}\}, \{v_3^{2t};v_7^{2t+1},v_8^{2t+1},v_9^{2t+1}\}\big\},\\
\mathcal F_{2t}^2=\big\{\{v_1^{2t};v_4^{2t+1}, v_5^{2t+1},v_6^{2t+1}\}, \{v_2^{2t};v_7^{2t+1},v_8^{2t+1},v_9^{2t+1}\},\{v_3^{2t};v_1^{2t+1},v_2^{2t+1},v_3^{2t+1}\}\big\},\\
\mathcal F_{2t}^3=\big\{\{v_1^{2t};v_7^{2t+1},v_8^{2t+1},v_9^{2t+1}\}, \{v_2^{2t};v_1^{2t+1},v_2^{2t+1},v_3^{2t+1}\}, \{v_3^{2t};v_4^{2t+1},v_5^{2t+1},v_6^{2t+1}\}\big\},\\
\mathcal F_{2t}^4=\big\{ \{v_4^{2t};v_1^{2t+1},v_2^{2t+1},v_3^{2t+1}\}, \{v_5^{2t};v_4^{2t+1},v_5^{2t+1},v_6^{2t+1}\}, \{v_6^{2t};v_7^{2t+1},v_8^{2t+1},v_9^{2t+1}\}\big\},\\
\mathcal F_{2t}^5=\big\{\{v_4^{2t};v_4^{2t+1},v_5^{2t+1},v_6^{2t+1}\}, \{v_5^{2t};v_7^{2t+1},v_8^{2t+1},v_9^{2t+1}\},\{v_6^{2t};v_1^{2t+1},v_2^{2t+1},v_3^{2t+1}\}\big\},\\ 
\mathcal F_{2t}^6=\big\{\{v_4^{2t};v_7^{2t+1},v_8^{2t+1},v_9^{2t+1}\}, \{v_5^{2t};v_1^{2t+1},v_2^{2t+1},v_3^{2t+1}\}, \{v_6^{2t};v_4^{2t+1},v_5^{2t+1},v_6^{2t+1}\}\big\}.
\end{array} \]%
\renewcommand{\arraystretch}{1.0}%
Then the edges between pairs of subsets in the $1$-factor $F_1$ can be partitioned into six colour classes $\mathcal C_1^1=\bigcup\limits_{j=1}^{t} \mathcal F_{2j}^1, \ldots, \mathcal C_1^6=\bigcup\limits_{j=1}^{t} \mathcal F_{2j}^6$. For each $i$ where $2\leq i\leq 2t+1$, the edges between pairs of subsets in the $1$-factor $F_i$ can be partitioned into six colour classes $\mathcal C_i^1, \ldots, \mathcal C_i^6$ in a similar manner. 

Next, for each $i$ where $1\leq i\leq 2t$, we will colour the five $3$-stars in $\mathcal B_i$. Since $V_i$ is the missing point of the $1$-factor $F_i$, we can reuse the colours from classes $\mathcal C_i^1$, $\mathcal C_i^2$, $\mathcal C_i^3$, $\mathcal C_i^4$ and $\mathcal C_i^5$ associated with the $1$-factor $F_i$. So, for each $i$ where $1\leq i\leq 2t$, we let $(\mathcal C_i^1)'=\mathcal C_i^1 \cup \big\{\{v_1^i;v_3^i,v_5^i,v_6^i\}\big\}$, $(\mathcal C_i^2)'=\mathcal C_i^2 \cup \big\{\{v_2^i;v_1^i,v_3^i,v_6^i\}\big\}$, $(\mathcal C_i^3)'=\mathcal C_i^3 \cup \big\{\{v_4^i;v_1^i,v_2^i,v_3^i\}\big\}$, $(\mathcal C_i^4)'=\mathcal C_i^4 \cup \big\{\{v_5^i;v_2^i,v_3^i,v_4^i\}\big\}$, and $(\mathcal C_i^5)'=\mathcal C_i^5 \cup \big\{\{v_6^i;v_3^i,v_4^i,v_5^i\}\big\}$. 

It remains to colour the $3$-stars in $\mathcal B_{2t+1}$, which requires eight colours.  Since $V_{2t+1}$ is the missing point of the $1$-factor $F_{2t+1}$, we can reuse the six colours from classes $\mathcal C_{2t+1}^1, \ldots, \mathcal C_{2t+1}^6$ associated with the $1$-factor $F_{2t+1}$, as follows:
\renewcommand{\arraystretch}{1.2}
\[ \begin{array}{l}
(\mathcal C_{2t+1}^1)'=\mathcal C_{2t+1}^1 \cup \big\{\{v_1^{2t+1};v_3^{2t+1},v_5^{2t+1},v_6^{2t+1}\}\big\}, \\
(\mathcal C_{2t+1}^2)'=\mathcal C_{2t+1}^2 \cup \big\{\{v_2^{2t+1};v_1^{2t+1},v_3^{2t+1},v_6^{2t+1}\}, \{v_9^{2t+1};v_{4}^{2t+1},v_5^{2t+1},v_7^{2t+1}\}\big\}, \\
(\mathcal C_{2t+1}^3)'=\mathcal C_{2t+1}^3 \cup \big\{\{v_4^{2t+1};v_1^{2t+1},v_2^{2t+1},v_3^{2t+1}\}, \{v_6^{2t+1};v_7^{2t+1},v_8^{2t+1},v_9^{2t+1}\}\big\}, \\
(\mathcal C_{2t+1}^4)'=\mathcal C_{2t+1}^4 \cup \big\{\{v_5^{2t+1};v_2^{2t+1},v_3^{2t+1},v_4^{2t+1}\}\big\}, \\
(\mathcal C_{2t+1}^5)'=\mathcal C_{2t+1}^5 \cup \big\{\{v_6^{2t+1};v_3^{2t+1},v_4^{2t+1},v_5^{2t+1}\}\big\}, \\
(\mathcal C_{2t+1}^6)'=\mathcal C_{2t+1}^6 \cup \big\{\{v_7^{2t+1};v_1^{2t+1},v_2^{2t+1},v_3^{2t+1}\}, \{v_8^{2t+1},v_4^{2t+1},v_5^{2t+1},v_9^{2t+1}\}\big\}.
\end{array} \]%
\renewcommand{\arraystretch}{1.0}%
For the remaining blocks, we introduce two new colours:
\renewcommand{\arraystretch}{1.2}%
\[ \begin{array}{l}
\mathcal A^1=\big\{\{v_7^{2t+1};v_4^{2t+1},v_5^{2t+1},v_8^{2t+1}\}, \{v_9^{2t+1};v_1^{2t+1},v_2^{2t+1},v_3^{2t+1}\}\big\}, \\
\mathcal A^2=\big\{\{v_8^{2t+1};v_1^{2t+1},v_2^{2t+1},v_3^{2t+1}\}\big\}
\end{array} \]%
\renewcommand{\arraystretch}{1.0}%
Together, these yield a $(12t+8)$-block-colourable $3$-star system of order $12t+9$ with colour classes $(\mathcal C_1^1)', \ldots (\mathcal C_1^5)', \mathcal C_1^6, \ldots, (\mathcal C_{2t}^1 )', \ldots, (\mathcal C_{2t}^5)', \mathcal C_{2t}^6, (\mathcal C_{2t+1}^1 )', \ldots, (\mathcal C_{2t+1}^6)', \mathcal A^1, \mathcal A^2$.
\end{proof}

Our last main result is another ``extension'' result, which enables us to consider the final congruence class for $3$-star systems.

\begin{thm} \label{cong4_odd}
For $n\equiv 10$ (mod $12$), there exists an $(n-1)$-block-colourable $3$-star system of order $n$.
\end{thm}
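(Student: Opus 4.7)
The plan is to extend the $(12t+8)$-block-colourable $3$-star system of order $12t+9$ from Theorem~\ref{cong3_odd} by adjoining one new point $x$, in direct analogy with the extensions carried out in Theorems~\ref{cong1_even},~\ref{cong1_odd}, and~\ref{cong4_even}. Write $n=12t+10$ and let $V=V'\cup\{x\}$, where $V'$ is partitioned into $V_1,\ldots,V_{2t}$ of size~$6$ and $V_{2t+1}$ of size~$9$, and $(V',\mathcal B')$ is the system from Theorem~\ref{cong3_odd} with its colour classes $(\mathcal C_i^k)'$, $\mathcal C_i^6$ for $1\le i\le 2t$, $(\mathcal C_{2t+1}^k)'$ for $1\le k\le 6$, and $\mathcal A^1,\mathcal A^2$. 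Decompose the $12t+9$ edges from $x$ into the $4t+3$ new $3$-stars $\{x;v_1^i,v_2^i,v_3^i\}$ and $\{x;v_4^i,v_5^i,v_6^i\}$ for each $1\le i\le 2t$, together with $\{x;v_1^{2t+1},v_2^{2t+1},v_3^{2t+1}\}$, $\{x;v_4^{2t+1},v_5^{2t+1},v_6^{2t+1}\}$, and $\{x;v_7^{2t+1},v_8^{2t+1},v_9^{2t+1}\}$ at $V_{2t+1}$. The goal is to absorb all but one of these into the $12t+8$ existing colour classes, adding exactly one new colour.

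For the $4t$ stars on $V_1,\ldots,V_{2t}$, I mimic the extension step of Theorem~\ref{cong1_odd}: for each $i$, reuse two colours from the near $1$-factor $F_{i-1}$ (indices taken cyclically, so $F_0=F_{2t+1}$). In $F_{i-1}$ the subset $V_i$ is paired with some $V_{j(i)}$; by inspecting the $\mathcal F$-families in the proof of Theorem~\ref{cong3_odd}, the class housing the $\mathcal F^1$-contribution of this pair touches $V_i$ only at $\{v_1^i,v_2^i\}$ (or at $\{v_1^i,v_2^i,v_3^i\}$ when $V_{j(i)}=V_{2t+1}$), and the class housing $\mathcal F^6$ only at $\{v_5^i,v_6^i\}$ (or at $\{v_4^i,v_5^i,v_6^i\}$). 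Hence $\{x;v_4^i,v_5^i,v_6^i\}$ may be appended to the first of these classes and $\{x;v_1^i,v_2^i,v_3^i\}$ to the second without any vertex collision; since the $F_{i-1}$'s are distinct for distinct $i$, no class is modified more than once in this step.

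The three $x$-stars supported on $V_{2t+1}$ form the crux. The key observation is that for $j\neq 2t+1$ the pair of $F_j$ containing $V_{2t+1}$ produces an $\mathcal F$-family whose three blocks collectively cover \emph{all nine} points of $V_{2t+1}$ as pendants; consequently every class $\mathcal C_j^k$ with $j\neq 2t+1$ has full $V_{2t+1}$-footprint and cannot accept an $x$-star on $V_{2t+1}$. The only viable candidates are the classes $(\mathcal C_{2t+1}^k)'$ and $\mathcal A^1,\mathcal A^2$, whose $V_{2t+1}$-footprints coincide with the vertex sets of the blocks from $\mathcal B_{2t+1}$ placed in them. A direct comparison with the explicit data of Theorem~\ref{cong3_odd} shows that $\{x;v_7^{2t+1},v_8^{2t+1},v_9^{2t+1}\}$ fits into any of $(\mathcal C_{2t+1}^1)'$, $(\mathcal C_{2t+1}^4)'$, $(\mathcal C_{2t+1}^5)'$; that $\{x;v_4^{2t+1},v_5^{2t+1},v_6^{2t+1}\}$ fits into $\mathcal A^2$; and that $\{x;v_1^{2t+1},v_2^{2t+1},v_3^{2t+1}\}$ conflicts with every existing class, forcing a brand-new singleton colour class $\mathcal E=\{\{x;v_1^{2t+1},v_2^{2t+1},v_3^{2t+1}\}\}$.

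The principal obstacle is the bookkeeping needed to keep the two steps consistent: the class chosen for $\{x;v_7^{2t+1},v_8^{2t+1},v_9^{2t+1}\}$ must not coincide with either of the two classes used in the first step to accommodate $V_1$ (which also draws from $F_{2t+1}=F_0$). Since three candidate classes exist for the star on $\{v_7^{2t+1},v_8^{2t+1},v_9^{2t+1}\}$ and $V_1$ occupies only two, a free choice always remains. The small case $t=0$ (that is, $n=10$) is handled by extending the $8$-block-colourable $S_3(9)$ of Example~\ref{3star9-colour}: one verifies that $\{x;4,5,6\}$ can be appended to $\mathcal C^8=\{\{8;1,2,3\}\}$, $\{x;7,8,9\}$ to $\mathcal C^1=\{\{1;3,5,6\}\}$, and $\{x;1,2,3\}$ placed in a new singleton class. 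In every case the inherited $12t+8$ classes together with $\mathcal E$ yield $12t+9=n-1$ colour classes, as required.
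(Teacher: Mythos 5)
Your proposal is correct and follows essentially the same route as the paper: extend the $(12t+8)$-block-coloured system of Theorem~\ref{cong3_odd} by one new point $x$, recycle the two colours of the near $1$-factor $F_{i-1}$ for the two $x$-stars at each $V_i$, and introduce exactly one new colour for one of the three $x$-stars at $V_{2t+1}$, giving $12t+9=n-1$ classes. The only immaterial differences are that the paper takes $F_0=F_{2t}$ rather than $F_{2t+1}$ (thereby reserving all of $F_{2t+1}$'s classes for the stars at $V_{2t+1}$ and avoiding the clash you resolve by your counting argument), and it splits the $x$--$V_{2t+1}$ edges transversally as $\{x;v_2^{2t+1},v_4^{2t+1},v_7^{2t+1}\}$, $\{x;v_1^{2t+1},v_6^{2t+1},v_8^{2t+1}\}$, $\{x;v_3^{2t+1},v_5^{2t+1},v_9^{2t+1}\}$ so that the first two slot into $(\mathcal C_{2t+1}^1)'$ and $(\mathcal C_{2t+1}^4)'$ directly.
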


\begin{proof}
We remark that $9$-block colourable $3$-star systems of order $10$ were obtained by computer search in subsection~\ref{sec:soicher} (see Table~\ref{table:chromatic}), so it suffices to consider $n=12t+10$ where $t\geq 1$.  Here, we will extend a block colouring of a $3$-star system of order $n-1\equiv 9$ (mod $12$), as constructed in the proof of Theorem~\ref{cong3_odd}.

Let $V=V' \cup \{x\}$ be the set of points where $|V'|=12t+9$. As in the proof of Theorem~\ref{cong3_odd}, partition $V'$ into $2t$ subsets of size $6$, namely $V_1=\{v_1^1,\ldots,v_6^1\}, \ldots,$ $V_{2t}=\{v_1^{2t},\ldots,v_6^{2t}\}$, and one subset of size $9$, namely $V_{2t+1}=\{v_1^{2t+1},\ldots,v_9^{2t+1}\}$. Let $\mathcal (V',\mathcal B')$ be a $(12t+8)$-block-colourable 3-star system of order $12t+9$ with colour classes labelled as in the proof of Theorem~\ref{cong3_odd}.

The next step is to decompose the edges between the point $x$ and the subsets $V_1,\ldots,V_{2t+1}$ into $3$-stars and colour these. For each $i$ where $1\leq i\leq 2t$, we decompose the edges between the point $x$ and the subset $V_i$ into two $3$-stars $\{x;v_1^i,v_2^i,v_3^i\}$ and $\{x;v_4^i,v_5^i,v_6^i\}$. To colour these, we reuse the colour classes associated to the $1$-factor $F_{i-1}$ from the proof of Theorem~\ref{cong3_odd}, where $F_0=F_{2t}$. 
In the $1$-factor $F_{i-1}$, suppose that $V_i$ is the first element in the ordered pair $(V_i,V_j)\in F_{i-1}$; we can see from the constructions in the proof of Theorem~\ref{cong3_odd} that the $3$-stars $\{x;v_1^i,v_2^i,v_3^i\}$ and $\{x;v_4^i,v_5^i,v_6^i\}$ do not have any intersection with the points in the colour classes $(C_{i-1}^6)'$ and $(C_{i-1}^1)'$ respectively. Therefore, for each $i$ where $1\leq i\leq 2t$, we let $(\mathcal C_{i-1}^1)''= (\mathcal C_{i-1}^1)' \cup \big\{\{x;v_4^{i},v_5^{i},v_6^{i}\}\big\}$ and $(\mathcal C_{i-1}^6)''= \mathcal C_{i-1}^6 \cup \big\{\{x;v_1^{i},v_2^{i},v_3^{i}\}\big\}$. 

It only remains to decompose the edges between the point $x$ and the subset $V_{2t+1}$ into $3$-stars and colour them.  We obtain three $3$-stars $\{x;v_2^{2t+1},v_4^{2t+1},v_7^{2t+1}\}$, $\{x;v_1^{2t+1},v_6^{2t+1},v_8^{2t+1}\}$ and $\{x;v_3^{2t+1},v_5^{2t+1},v_9^{2t+1}\}$. To colour the first two of these, we reuse the colour classes $(C_{2t+1}^1)'$ and $(C_{2t+1}^4)'$, respectively, associated to the $1$-factor $F_{2t+1}$. To colour the third $3$-star, we define a new colour class $\mathcal A_3$ consisting of this $3$-star only.  So we have $(\mathcal C_{2t+1}^1)''=(\mathcal C_{2t+1}^1)' \cup \big\{\{x;v_2^{2t+1},v_4^{2t+1},v_7^{2t+1}\}\big\}$, $(\mathcal C_{2t+1}^4)''=(\mathcal C_{2t+1}^4)' \cup \big\{\{x;v_1^{2t+1},v_6^{2t+1},v_8^{2t+1}\}\big\}$, and $\mathcal A^3=\big\{\{x;v_3^{2t+1},v_5^{2t+1},v_9^{2t+1}\}\big\}$.

Together, these yield a $(12t+9)$-block-colourable $3$-star system of order $12t+10$ with colour classes $(\mathcal C_1^1)'', (\mathcal C_1^2)', \ldots, (\mathcal C_1^5)', (\mathcal C_1^6 )'', \ldots, (\mathcal C_{2t}^1 )'', (\mathcal C_{2t}^2)', \ldots, (\mathcal C_{2t}^5)', (\mathcal C_{2t}^6)'', (\mathcal C_{2t+1}^1 )'', (\mathcal C_{2t+1}^2)',\linebreak (\mathcal C_{2t+1}^3)', (\mathcal C_{2t+1}^4)'', (\mathcal C_{2t+1}^5)', (\mathcal C_{2t+1}^6)', \mathcal A^1, \mathcal A^2, \mathcal A^3.$
\end{proof}

Combining the results of this section, as well as the $e=3$ case of Corollary~\ref{combined_e}, we have the following corollary.

\begin{cor} \label{combined_3}
For every admissible order $n$, there exists either an $(n-1)$-block-colourable or an $n$-block-colourable $3$-star system of order $n$. 
\end{cor}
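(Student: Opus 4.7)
The plan is to prove the corollary by a simple case analysis on the residue of $n$ modulo $12$, invoking the four theorems of this section together with the $e=3$ case of Corollary~\ref{combined_e}. First I would recall the admissibility condition for $3$-star systems: by the Yamamoto \emph{et al.} criterion, an $S_3(n)$ exists if and only if $n\geq 6$ and $3\mid\binom{n}{2}$. Since $n(n-1)$ is always even, the divisibility condition reduces to $3\mid n(n-1)$, so $n\equiv 0$ or $1\pmod 3$. Combined with the parity of $n$, the admissible residues modulo $6$ are exactly $0,1,3,4$, and equivalently the admissible residues modulo $12$ are exactly $0,1,3,4,6,7,9,10$.

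Next I would check that the five available results collectively cover all eight of these residue classes modulo $12$. Specifically, the $e=3$ case of Corollary~\ref{combined_e} handles the classes $n\equiv 0,1\pmod 6$, i.e.\ $n\equiv 0,1,6,7\pmod{12}$; Theorem~\ref{cong3_even} handles $n\equiv 3\pmod{12}$; Theorem~\ref{cong4_even} handles $n\equiv 4\pmod{12}$; Theorem~\ref{cong3_odd} handles $n\equiv 9\pmod{12}$; and Theorem~\ref{cong4_odd} handles $n\equiv 10\pmod{12}$. Taking the union of these residues gives exactly $\{0,1,3,4,6,7,9,10\}\pmod{12}$, matching the set of admissible residues; in every case the conclusion gives an $S_3(n)$ that is either $(n-1)$- or $n$-block-colourable, as required.

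Finally I would briefly address the boundary cases: the constructions in Theorems~\ref{cong3_even}--\ref{cong4_odd} were stated with $t\geq 1$ (or $t\geq 0$ in the case of Theorem~\ref{cong3_odd} and Theorem~\ref{cong4_odd}), and Corollary~\ref{combined_e} invokes the trivial boundary exceptions for $n=2e$ and $n=2e+1$ built into Theorems~\ref{cong0_odd} and~\ref{cong1_odd}. For $e=3$ these trivial exceptions are $n=6$ and $n=7$, where every block must receive its own colour, giving chromatic indices $5=n-1$ and $7=n$ respectively; both meet the bound. The main thing to be careful about is merely this bookkeeping of small cases and residue classes, and I do not foresee any genuine obstacle beyond verifying that no admissible order slips through the cracks of the case split.
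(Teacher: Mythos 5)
Your proposal is correct and follows exactly the route the paper takes: the paper derives this corollary by combining the $e=3$ case of Corollary~\ref{combined_e} (covering $n\equiv 0,1 \pmod 6$) with Theorems~\ref{cong3_even}--\ref{cong4_odd} (covering $n\equiv 3,4,9,10 \pmod{12}$), which together account for every admissible residue class. Your bookkeeping of the admissible residues and of the small boundary cases $n=6,7,9$ is accurate and merely makes explicit what the paper leaves implicit.
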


\section{Conclusion} \label{sec:conclusion}
Admittedly, as can be seen from the experimental results in Section~\ref{sec:computer} (particularly Table~\ref{table:chromatic}), our main theorems in Sections~\ref{sec:bound-e} and~\ref{sec:bound-3} do not give the best possible bounds on the least possible chromatic index of an $S_e(n)$.  However, a more positive observation is that our upper bounds are linear in $n$, as is the trivial lower bound of $L(n,e)$ at the end of Section 1.  So we can conclude that the least possible chromatic index is asymptotically $\Theta(n)$, while the number of blocks is quadratic in $n$.  However, the general question of precisely determining the least possible chromatic index of an $S_e(n)$ is, of course, still wide open.

Our computations also showed that, for $e=3$ and $9\leq n\leq 10$, the {\em largest} possible chromatic index of an $S_e(n)$ is equal to the number of blocks, i.e.\ any two blocks must intersect (such as the system in Example~\ref{rainbow}).  Determining what this maximum value can be is another interesting open problem.  (We note that for $K_3$-designs, i.e.\ Steiner triple systems, the equivalent question was discussed by Rosa~\cite{Rosa15} in 2015 and considered in detail by Bryant {\em et al.}\ in 2017~\cite{Bryant17}.)

\section*{Appendix A: GAP programs}

\subsection*{A.1\quad General techniques}
In {\sf GAP}, we specify an $e$-star as an ordered pair of sets, where the first entry is the set containing the root vertex, and the second entry is the set of pendant vertices.  So, for example, the $3$-star $\{1; 2,3,4\}$ is given as {\footnotesize{\texttt{[ [1], [2,3,4] ]}}}.  An $e$-star system is specified as a set of $e$-stars. The following functions are used to manipulate $e$-stars or $e$-star systems, and to determine the chromatic index of a system (as the chromatic number of its block intersection graph).  Most of these functions are dependent on the {\sf GRAPE} package, which must be accessed using the {\footnotesize{\texttt{LoadPackage("grape");}}} command.

\begin{scriptsize}
\begin{verbatim}
## Function to make the edge set of an e-star <s>
StarEdges:=function(s,e)
  local E;
  E:=List(s[2], x->Set([ s[1][1], x ]) );
  return Set(E);
end;

## Inverse function to make an e-star from its edge set
InverseStarEdges:=function(edge_set,e)
  local root,pendants,S;
  root:=Intersection(edge_set);
  if Size(root)<>1 
    then return fail;
    else
    pendants:=List(edge_set, x->Difference(x,root));
    S:=[root, Union(pendants)];
    return S;
  fi;
end;

## Function to determine the chromatic index of an e-star system <D>
ChromaticIndexOfStarSystem:=function(D)
  local big,auts,chi;
  D:=Set(D);
  big:=Graph(	Group(()), D, OnTuplesSets,  
              function(x,y) return (x<>y and Intersection(Union(x),Union(y))<>[]); end);
  auts:=AutGroupGraph(big);
  big:=NewGroupGraph(auts,big); ## ensures the full automorphism group of <big> is used  
  chi:=ChromaticNumber(big);
  return chi;
end;
\end{verbatim}
\end{scriptsize}

\subsection*{A.2\quad The clique-finding approach}
Here, we give the {\sf GAP} code used in subsection~\ref{sec:cliques}, using a clique-finding approach to obtain the chromatic index of all $S_3(9)$.

\begin{scriptsize}
\begin{verbatim}
## Function to make the graph whose vertices are all possible e-stars, adjacent whenever they are edge-disjoint
StarGraph:=function(n,e)
  local gamma;
  gamma:=Graph( SymmetricGroup(n), [ [[1],[2..e+1]] ], OnTuplesSets, 
                function(x,y) return ( x<>y and Intersection(StarEdges(x,e),StarEdges(y,e))=[] ); end );
  return gamma;
end;

## Function to convert a clique <C> in a "star graph" <gamma> into a set of e-stars
CliqueStars:=function(gamma,C)
  local blocks;
  blocks:= List(C, x->gamma.names[x]); ## the ".names" component stores the e-star corresponding to each vertex
  return blocks;
end;

## Code to obtain all possible 3-star systems of order 9, and determine the chromatic index of each
gamma:=StarGraph(9,3);
cliques:=CliquesOfGivenSize(gamma,12,1); ## the argument "1" will find all cliques of the specified size
chromatic_list:=[];
for C in cliques do
  D:=CliqueStars(gamma,C);
  chi:=ChromaticIndexOfStarSystem(D);
  Add(chromatic_list,chi);
od;
\end{verbatim}
\end{scriptsize}

\subsection*{A.3\quad The design classification approach}
Here, we give the {\sf GAP} code (using the {\sf DESIGN} package~\cite{DESIGN}) implementing the techniques described in subsection~\ref{sec:soicher}.

\begin{scriptsize}
\begin{verbatim}
## Obtain action of Sym(n) on edges of complete graph K_n
SymmetricGroupOnEdges:=function(n)
  local S,edges,hom;
  S:=SymmetricGroup([1..n]);
  edges:=Combinations([1..n],2);
  hom:=ActionHomomorphism(S,edges,OnSets);
  return Image(hom,S);
end;

## Function to obtain representatives for each conjugacy class of subgroups of Sym(n) of prime order
PGroupRepresentatives:=function(n)
  local G,class_reps,group_list,c,x;
  G:=SymmetricGroupOnEdges(n);
  class_reps:=Set(ConjugacyClasses(G),c->Group(Representative(c)));
  group_list:=Filtered(class_reps,x->IsPrimeInt(Size(x)));
  return group_list;
end;

## Function to construct set of all e-stars as a block design
## Note that entries 1..e of "edges" will be the edges of the e-star [ [1], [2..e+1] ]
AllStarsDesign:=function(n,e)
  local edges,G,D;
  edges:=Combinations([1..n],2);
  G:=SymmetricGroupOnEdges(n);
  D:=BlockDesign(Size(edges),[[1..e]],G);;
  AllTDesignLambdas(D);
  return D;
end;



## Function to obtain all e-star systems of order n invariant under a given group H
InvariantStarSystems:=function(n,e,H)
  local edges,G,D,N,spreads,systems_edges,systems,x,y,z;
  edges:=Combinations([1..n],2);
  G:=SymmetricGroupOnEdges(n);
  D:=AllStarsDesign(n,e);
  N:=Normalizer(G,H);
  spreads:=BlockDesigns(rec(v:=D.v, blockSizes:=BlockSizes(D),
                            blockDesign:=D, tSubsetStructure:=rec(t:=1, lambdas:=[1]),
                            requiredAutSubgroup:=H, isoGroup:=N, isoLevel:=2));
  systems_edges:=List(spreads, x->List(x.blocks, y->edges{y}));
  systems:=List(systems_edges, x->Set(List(x, y->InverseStarEdges(y,3))));
  return systems;
end;

## Sample code to determine chromatic index of 3-star systems of order 10 invariant under groups of prime order
systems:=[];
groups:=PGroupRepresentatives(10);
for H in groups do
  Append(systems, InvariantStarSystems(10,3,H));
od;
chromatic_list:=[];
for D in systems do
  chi:=ChromaticIndexOfStarSystem(D);
  Add(chromatic_list,chi);
od;

\end{verbatim}
\end{scriptsize}

\subsection*{Acknowledgements}
The authors would like to thank Leonard Soicher for suggesting the computational approach of subsection~\ref{sec:soicher}.  They also acknowledge financial support from an NSERC Discovery Grant held by the first author, in particular the one-time, one-year extension with funds due to COVID-19.  Finally, we thank the two anonymous referees whose comments helped to improve the paper.


\begin{thebibliography}{99}

\bibitem{Alspach89} B.\ R.\ Alspach, P.\ J.\ Schellenberg, D.\ R.\ Stinson and D.\ G.\ Wagner, The Oberwolfach problem and factors of uniform odd length cycles, {\em J. Combin. Theory Ser. A} {\bf 52} (1989), 20--43.

\bibitem{Bermond90} J.-C.\ Bermond, K.\ Heinrich, and M.-L. Yu, Existence of resolvable path designs, {\em European J. Combin.} {\bf 11} (1990), 205--211.

\bibitem{Bryant17} D.\ Bryant, C.\ J.\ Colbourn, D.\ Horsley and I.\ M.\ Wanless, Steiner triple systems with high chromatic index, {\em SIAM J. Discrete Math.} {\bf 31} (2017), 2603--2611.

\bibitem{Danziger04} P.\ Danziger, E.\ Mendelsohn and G.\ Quattrocchi, On the chromatic index of path decompositions, {\em Discrete Math.} {\bf 284} (2004), 107--121.

\bibitem{DarijaniPike} I.\ Darijani and D.\ A.\ Pike, Colourings of star systems, {\em J. Combin. Des.} {\bf 28} (2020), 525--547.

\bibitem{GAP} The GAP~Group, \emph{GAP -- Groups, Algorithms, and Programming, Version 4.11.1}; 2021, \url{www.gap-system.org}.

\bibitem{Horton85} J.\ D.\ Horton, Resolvable path designs, {\em J. Combin. Theory Ser. A} {\bf 39} (1985), 117--131.

\bibitem{Huang76} C.\ H.\ T.\ Huang, Resolvable balanced bipartite designs, {\em Discrete Math.} {\bf 14} (1976), 319--335.

\bibitem{resolvable} S.\ K\"{u}\c{c}\"{u}k\c{c}if\c{c}i, G.\ Lo Faro, S.\ Milici and A.\ Tripodi, Resolvable 3-star designs, {\em Discrete Math.} {\bf 338} (2015), 608--614.

\bibitem{Ostergard05} P.\ R.\ J.\ \"Osterg{\aa}rd, Constructing combinatorial objects via cliques, in {\em Surveys in Combinatorics 2005} (ed B.\ S.\ Webb), London Mathematical Society Lecture Notes Series (327), Cambridge University Press, Cambridge, 2005.

\bibitem{RC_wilson} D.\ K.\ Ray-Chaudhuri and R.\ M.\ Wilson, Solution of Kirkman's schoolgirl problem, in {\em Combinatorics (Proc. Sympos. Pure Math., Vol. XIX, Univ. California, Los Angeles, 1968)}, pp.\ 187--203. American Mathematical Society, Providence, 1971.

\bibitem{Rosa15} A.\ Rosa, {Combinatorial Designs with Applications: Notes}, Belianum, Vydavatel'stvo Univerzity Mateja Bela, Bansk\'a Bystrica, 2015.

\bibitem{DESIGN} L.\ H.\ Soicher, DESIGN, The Design Package for GAP, Version 1.7 (2019) (Refereed GAP package), \url{gap-packages.github.io/design}.

\bibitem{GRAPE} L.\ H.\ Soicher, GRAPE, GRaph Algorithms using PErmutation groups, Version 4.8.5 (2021) (Refereed GAP package), \url{gap-packages.github.io/grape}.

\bibitem{Vanstone93} S.\ A.\ Vanstone, D.\ R.\ Stinson, P.\ J.\ Schellenberg, A.\ Rosa, R.\ S.\ Rees, C.\ J.\ Colbourn, M.\ W.\ Carter and J.\ E.\ Carter, Hanani triple systems, {\em Israel J.\ Math.} {\bf 83} (1993), 305--319.

\bibitem{Yamamoto1975} S.\ Yamamoto, H.\ Ikeda, S.\ Shige-eda, K.\ Ushio, and N.\ Hamada, On claw-decompositions of complete graphs and complete bigraphs, {\em Hiroshima Math. J.} {\bf 5} (1975), 33--42.

\bibitem{Yu93} M.-L.\ Yu, On tree factorizations of $K_n$, {\em J. Graph Theory} {\bf 17} (1993), 713--725.

\end{thebibliography}
\end{document}